\documentclass[preprint,12pt]{elsarticle}

\usepackage{amsmath,amssymb,mathtools,amsthm}
\usepackage{enumitem}
\usepackage{array,booktabs,tabularx}
\usepackage[table]{xcolor}
\usepackage{hyperref}

\theoremstyle{plain}
\newtheorem{theorem}{Theorem}[section]
\newtheorem{proposition}[theorem]{Proposition}
\newtheorem{lemma}[theorem]{Lemma}
\newtheorem{corollary}[theorem]{Corollary}
\theoremstyle{definition}
\newtheorem{definition}[theorem]{Definition}
\newtheorem{remark}[theorem]{Remark}
\newtheorem{example}[theorem]{Example}

\newcommand{\M}{\mathbf M}
\newcommand{\A}{\mathbf A}
\newcommand{\PosId}{\operatorname{Id}^{+}}
\newcommand{\Sk}{\operatorname{Sk}}
\newcommand{\Cr}{\mathcal C_{\mathrm r}}
\newcommand{\layer}[1]{L_{#1}}
\newcommand{\Comp}[1]{\mathbf L_{#1}}
\newcommand{\lay}[1]{\operatorname{L}_{#1}}

\newcommand{\leOmega}{\le_{\Omega}}

\newcommand{\Def}{\mathrel{:=}}

\title{Residual coherentization of balanced residuated partially ordered semigroups}

\author[aff1,aff2]{S\'andor Jenei\corref{cor1}}
\ead{jenei.sandor@uni-eszterhazy.hu}
\ead{jenei@ttk.pte.hu}
\cortext[cor1]{Corresponding author}

\affiliation[aff1]{
 organization={Institute of Mathematics and Informatics, Eszterh\'azy K\'aroly Catholic University},
 addressline={Le\'anyka utca 4},
 city={Eger},
 postcode={3300},
 country={Hungary}
}

\affiliation[aff2]{
 organization={Institute of Mathematics and Informatics, Faculty of Sciences, University of P\'ecs},
 addressline={Ifj\'us\'ag \'utja 6},
 city={P\'ecs},
 postcode={7624},
 country={Hungary}
}

\begin{document}

\begin{frontmatter}

\begin{abstract}
This paper develops a canonical decomposition--reconstruction theory for balanced residuated partially ordered semigroups.
The starting point is the intrinsic local-unit map
$
\tau(x)=x\backslash x=x/x,
$
whose values are positive idempotents.
The primitive fibres of this map are generally too fine to be compatible with multiplication and residuals: the local units of \(xy\), \(x\backslash y\), and \(x/y\) need not be determined by the local units of \(x\) and \(y\).
We therefore construct the residual coherentization \(\mathcal C_{\mathrm r}(\mathbf M)\), the finest quotient of the positive-idempotent skeleton on which these three local-unit outputs are well defined at quotient level.
The blocks of \(\mathcal C_{\mathrm r}(\mathbf M)\) define the canonical components, while the quotient skeleton records the target component for products and residuals.
Together with the component algebras, the product-shadow maps \(x\mapsto xq\), and the residual-shadow maps \(y\mapsto y/q\), these data reconstruct the original algebra.
The final part compares this construction with subsemilattice-steady visibility decompositions.
At every finite stage of the induced iterative decompositions, the partition obtained from residual coherentization is finer than the partition obtained from any subsemilattice-steady visibility choice.
Equivalently, each component produced by the subsemilattice-steady construction is a union of residual-coherent components. 
\end{abstract}

\begin{keyword}residuated partially ordered semigroup, local unit,
positive idempotent, canonical decomposition, residual coherentization,
layer decomposition, P\l{}onka sum
\end{keyword}

\end{frontmatter}

\section{Introduction}

Residuated partially ordered semigroups are ordered semigroups equipped with left and right residuals.
Thus multiplication is not only isotone, but also interacts with the order through two adjoint operations.
This places them at the common boundary of ordered semigroup theory, residuated algebra, lattice theory, and algebraic logic.

The lattice-ordered members of this landscape are the residuated lattices, introduced in the classical work of Ward and Dilworth \cite{WardDilworth1939}.
They now form one of the standard algebraic frameworks for substructural logics: in the same way in which Boolean algebras, Heyting algebras, and \(\mathrm{MV}\)-algebras provide algebraic semantics for classical, intuitionistic, and {\L}ukasiewicz logic, broad classes of residuated lattices serve as algebraic semantics for systems without some of the structural rules of classical logic \cite{GalatosJipsenKowalskiOno2007,CignoliDOttavianoMundici2000,Hajek1998}.
Prominent examples and subclasses include Boolean algebras, Heyting algebras, \(\mathrm{MV}\)-algebras, BL-algebras, MTL-algebras, G\"odel algebras, product algebras, hoops, cancellative and integral commutative residuated lattices, lattice-ordered groups, negative cones and intervals of lattice-ordered groups, and complete examples such as quantales, to name a few \cite{GalatosJipsenKowalskiOno2007,AglianoMontagna2003,EstevaGodo2001,Rosenthal1990}.

There is also a classical algebraic source coming from ideal theory.
If \(R\) is a unital ring, the lattice of two-sided ideals of \(R\), ordered by inclusion and equipped with ideal product, is a unital quantale; the two residuals are given by the corresponding left and right ideal quotients.
Thus ideal multiplication and ideal quotients provide a natural ring-theoretic model of residuated structure \cite{Rosenthal1990,Gilmer1972,LarsenMcCarthy1971}.
This connection is another reason why decomposition methods based on products, residuals, and local units are relevant beyond the algebraic semantics of substructural logics.

\smallskip
The point of the paper is not to study all these classes separately.
Rather, it isolates one structural mechanism present in many of them, and automatic in several of the most prominent commutative or monoidal cases: local units arising from self-residuals, and the canonical coarsening needed when primitive local-unit fibres are not compatible with products and residuals.
The present paper works with balanced residuated partially ordered semigroups, where the self-residuals of an element coincide and determine an intrinsic positive idempotent.
The equality of the two self-residuals is automatic in the commutative setting, since the two residuals then coincide.
The positivity part is automatic in the monoidal setting: if a unit is present, then it is below both \(x\backslash x\) and \(x/x\).
Thus the balanced semigroup setting isolates, without assuming either commutativity or a global unit, the local-unit phenomenon that is automatic in many familiar residuated contexts.
The aim is to use this local-unit information as the basis for a canonical component decomposition and reconstruction theorem.

\smallskip
Decomposition theory for residuated structures has several established sources.
One source is the ordinal-sum and component-sum analysis of chains in algebraic logic.
BL-chains, MTL-chains, hoops, and finite integral commutative residuated chains have been studied through Archimedean classes, cancellative components, ordinal-sum representations, split exact sequences, and related structural methods \cite{AglianoMontagna2003,Busaniche2005,MontagnaNogueraHorcik2006,Horcik2007,HorcikMontagna2009,Horcik2011,CastiglioniZuluaga2021}.
Another source is ordered semigroup theory, where classical decompositions use Green relations, Rees quotients, principal factors, semilattices of components, or ordinal-sum-like assemblies \cite{Green1951,Rees1940,Clifford1941,CliffordPreston1961,Howie1995}.
P\l{}onka sums give a universal-algebraic version of the same general pattern: components are indexed by a semilattice, and operations are evaluated after the arguments have been transported to the component selected by the semilattice operation \cite{Plonka1967}.
The present paper belongs to this broad component-and-transport tradition, but its indexing object is not chosen externally.
It is extracted from the local-unit geometry of the residuated semigroup itself.

The specific local-unit-based decomposition line used here began with \cite{JeneiGroupRepresentation}, first circulated as a preprint in 2019 and published in 2022.
There the local-unit map was introduced as a decomposition invariant for even and odd involutive commutative \(\mathrm{FL}_e\)-chains.
The universe was partitioned into fibres of this map, each fibre being indexed by a positive idempotent.
For the class studied there, those fibres carried layer algebras, and the product part of the reconstruction had the form of a P\l{}onka-sum reconstruction over the positive-idempotent skeleton, supplemented by a directed lexicographic reconstruction of the order.
This is the first occurrence, in the sense relevant for the present paper, of a systematic decomposition in which the local-unit map itself is used as the primary stratifying invariant: the algebra is partitioned into fibres of that map, the fibres are indexed by positive idempotents, and reconstruction is organized over the resulting positive-idempotent skeleton.

Subsequent work has developed related local-unit or idempotent-based stratifications in several residuated and ordered-algebraic settings.
Finite commutative idempotent involutive residuated lattices were analyzed in \cite{JipsenTuytValota2021}, finite involutive po-semilattices in \cite{JipsenSugimoto2022}, locally integral involutive po-semigroups in \cite{GilFerezJipsenSugimoto2023}, and locally integral involutive po-monoids and semirings in \cite{GilFerezJipsenLodhia2023}.
Balanced residuated partially ordered monoids and, more recently, balanced residuated partially ordered semigroups were studied in \cite{BGJPSm,BGJPS}.
Related announcements on balanced residuated posets and locally integral involutive residuated structures appear in \cite{Jipsen2024BalancedPosets,Sugimoto2024LocallyIntegral}.
These papers show that the local-unit viewpoint is not tied to one special class of chains.
Rather, it has become a recurring structural device in the study of residuated and involutive ordered algebraic systems.

The present paper works in the same ambient class as \cite{BGJPS}, namely balanced residuated partially ordered semigroups.
The results of \cite{BGJPS} are used here only as a comparison framework for steady and visibility-based decompositions, not as prerequisites for the results proved below.

\smallskip
Let
\[
 \M=\langle M,\le,\cdot,\backslash,/\rangle
\]
be a balanced residuated partially ordered semigroup.
For every element, the two self-residuals agree and are positive.
The common value is then automatically idempotent, so
\[
 x\backslash x=x/x\in\PosId(\M).
\]
We write
\[
 \tau(x)\Def x\backslash x=x/x
\]
and call \(\tau(x)\) the local unit of \(x\).
Thus \(\tau\) maps the algebra into the carrier of the positive-idempotent skeleton, namely into \(\PosId(\M)\).
The positive-idempotent skeleton itself is the ordered set introduced below.
The primitive local-unit layers are the fibres
\[
 \lay{u}:=\{x\in M:\tau(x)=u\},
 \qquad u\in\Sk(\M).
\]
A natural first question is whether these primitive layers are already the components of a reconstruction.
For this to hold, the layer of an output must be determined by the layers of its inputs.
In the present residuated setting this means that the local units of
\[
 xy,\qquad x\backslash y,\qquad x/y
\]
should be determined by the local units of \(x\) and \(y\).
This primitive-layer requirement is natural, but it is restrictive.
It asserts that the finest local-unit stratification is already compatible with multiplication and with both residuals.

The primitive full-skeleton P\l{}onka-sum framework was introduced for balanced residuated partially ordered monoids in \cite{BGJPSm}, while its semigroup-level extension, including chosen visible resolutions under suitable steadiness and fibrancy hypotheses, is developed in \cite{BGJPS}.
There the layers support a directed system of metamorphisms, and the original algebra is reconstructed from the corresponding P\l{}onka-type data.
An important feature of that residuated framework is that order recovery uses a paired transport into a common target component.
In the primitive \(\tau\)-layer setting, every component is indexed by a single positive idempotent.
Thus, for a given target component, there is a distinguished target idempotent.
One transport is given by multiplication with this idempotent, and the other is given by division by the same idempotent.
The two transported elements are then compared inside the target component.

The present paper keeps this paired-transport principle but changes the level at which the components are chosen.
Instead of assuming that the primitive or visible layers already provide the right reconstruction data, we pass to the canonical residual-coherent quotient of the positive-idempotent skeleton.
At this quotient level a target component may contain several positive idempotents, and no single one is distinguished as the transition idempotent.
The single paired transport is therefore replaced by two indexed families: the product-shadow maps \(x\mapsto xq\) and the residual-shadow maps \(y\mapsto y/q\), where \(q\) ranges over the positive idempotents of the target component.
Both families transport elements into the common target component.
Their ambient-order behavior is different: the product shadow \(xq\) lies above \(x\), while the residual shadow \(y/q\) lies below \(y\).
Thus, for a suitable target idempotent \(q\), the comparison
$
 xq\le y/q
$
inside the target component is a genuine ambient-order certificate.
This family-valued transport mechanism is used twice: multiplication is recovered from the product-shadow family, and order is recovered from paired families of product and residual shadows.

The contribution of the paper is fourfold.
First, it isolates the quotient-level coherence condition needed for products and both residuals, and constructs the canonical residual coherentization \(\Cr(\M)\).
Second, it shows that the resulting blocks carry balanced residuated component algebras and that the quotient skeleton determines the target component for all three operations.
Third, it proves a reconstruction theorem in which multiplication is recovered from the product-shadow family, order is recovered from paired families of product and residual shadows, and the residuals are then forced by residuation.
Finally, it compares the canonical decomposition induced by residual coherentization with subsemilattice-steady visibility decompositions.

\smallskip
We now describe the construction more explicitly.
Instead of assuming that the primitive \(\tau\)-layers already satisfy the product and residual local-unit laws, we canonically coarsen the positive-idempotent skeleton.
Whenever the current skeleton resolution does not determine the local unit of a product or residual output, the positive idempotents responsible for that failure are identified.
Iterating this procedure yields a canonical partition
\[
 \Cr(\M)
\]
of \(\Sk(\M)\), called the residual coherentization of \(\tau\).
It is the finest quotient of the positive-idempotent skeleton on which the local units of products and of both residuals are determined at the quotient level.
If \(A\) is a block of \(\Cr(\M)\), the corresponding component is
\[
 \layer{A}:=\{x\in M:\tau(x)\in A\}
          =\bigcup_{u\in A}\lay{u}.
\]
Thus the components are determined intrinsically by the algebra: they are not imposed by external choices or external data, and they need not be the primitive fibres of \(\tau\).
They are the components induced by the finest quotient of the skeleton on which the product and residual \(\tau\)-value laws are coherent.

The reconstruction theorem has three parts.
First, multiplication is recovered from the component multiplications and the product-shadow families into the target component.
More explicitly, if \(x\in\layer{A}\), \(y\in\layer{B}\), and \(C=A\vee B\) is the target block in the quotient skeleton, then
\[
 xy
 =
 \min\{(xr)(yr):r\in C\},
\]
where the minimum is taken in the target component \(\layer{C}\).
Thus the quotient skeleton identifies the component in which the product must lie, the product-shadow families move both factors into that component, and the internal multiplication of \(\Comp{C}\) computes the displayed candidates.
Second, order is recovered by a resolved paired-transport test.
The central order formula says that, for the same \(x\), \(y\), and \(C\),
\[
 x\le y
 \quad\Longleftrightarrow\quad
 \exists q\in C\quad xq\le_C y/q .
\]
The implication from right to left follows from positivity and residuation.
The converse uses residual coherence to place the relevant product-shadow and residual-shadow values in the same target component.
Third, once order and multiplication are recovered, the residuals are recovered by residuation.

The paper also compares residual coherentization with subsemilattice-steady visibility decompositions, in the sense of \cite{BGJPS}.
For the purposes of this comparison, the relevant hypotheses are recalled explicitly below.
Such a decomposition begins with a prior visibility choice, whereas \(\Cr(\M)\) is forced intrinsically by the product and residual \(\tau\)-value laws.
By contrast, \(\Cr(\M)\) starts from the full positive-idempotent skeleton and identifies only those distinctions which the product and residual \(\tau\)-value laws force one to identify.
The comparison theorem proves that this difference is structural.
At the first level, the residual coherentization refines every decomposition coming from a subsemilattice-steady visibility quotient.
Under iteration, the same refinement persists frontierwise.
Consequently, when both procedures terminate, every terminal subsemilattice-steady component is a union of terminal \(\tau\)-residuation-cohesive residual components.
In this refinement sense, residual coherentization is finer than every chosen-visibility quotient.

The paper is organized as follows.
Section~\ref{sec:balanced-notation} fixes the notation for balanced residuated partially ordered semigroups, positive idempotents, local units, and primitive layers.
Section~\ref{sec:motivation_narrow_scope} explains why primitive \(\tau\)-value laws are too narrow for the intended generality.
Section~\ref{sec:tau_residuation_coherence} defines \(\tau\)-residuation-coherent partitions and the residual coherentization \(\Cr(\M)\).
Section~\ref{sec:components} constructs the component algebras.
Section~\ref{sec:quotient-skeleton} records the quotient skeleton and its join operation.
Section~\ref{sec:shadow-maps} introduces the product-shadow and residual-shadow maps.
Sections~\ref{sec:paired-transport-order-recovery} and \ref{sec:algebra-reconstruction} prove order recovery and full algebra reconstruction.
Sections~\ref{sec:separating-comparison} and \ref{sec:Cr-refines-subsemilattice-steady} compare the construction with subsemilattice-steady visibility decompositions.

\section{Balanced residuated local-unit notation}
\label{sec:balanced-notation}

An element \(p\) of a partially ordered semigroup \(\langle M,\le,\cdot\rangle\) is called \emph{positive} if
\[
 z\le zp
 \qquad\text{and}\qquad
 z\le pz
 \qquad(z\in M).
\]
A \emph{positive idempotent} is a positive element \(p\) satisfying \(p^2=p\).

\begin{definition}[Balanced residuated partially ordered semigroup]
\label{def:balanced-residuated-posg}
A \emph{residuated partially ordered semigroup} is a structure
\[
\M=\langle M,\le,\cdot,\backslash,/\rangle
\]
such that \(\langle M,\le\rangle\) is a poset, \(\langle M,\cdot\rangle\) is a semigroup, and
\[
 xy\le z
 \quad\Longleftrightarrow\quad
 x\le z/y
 \quad\Longleftrightarrow\quad
 y\le x\backslash z.
\]
It is \emph{balanced} if, for every \(x\in M\),
\[
 x\backslash x=x/x
\]
and this common self-residual is positive.
It is then automatically idempotent: putting \(a=x\backslash x\), residuation
gives \(xa\le x\), while positivity gives \(x\le xa\), so \(xa=x\).
Hence \(xa^2=x\), and residuation yields \(a^2\le x\backslash x=a\), whereas
positivity gives \(a\le a^2\).
Thus \(a^2=a\).

For a balanced residuated partially ordered semigroup we write
\[
 \tau(x):=x\backslash x=x/x.
\]

\end{definition}

\noindent\textbf{Standing assumption.}
Throughout the paper, unless explicitly stated otherwise, \(\M\) denotes a balanced residuated partially ordered semigroup.
Thus \(\tau(x)=x\backslash x=x/x\) is always defined, and its values are positive idempotents.

Let
\[
   \PosId(\M)=\{u\in M:u^2=u\ \text{and}\ u\ \text{is positive}\}
\]
denote the set of positive idempotents of \(\M\).
The positive-idempotent skeleton is the ordered set
\[
   \Sk(\M)=\langle \PosId(\M),\leq\rangle,
\]
where the order is inherited from \(\M\).
As usual, we also write \(u\in\Sk(\M)\) as shorthand for \(u\in\PosId(\M)\).
The primitive \(\tau\)-layer at \(u\in\Sk(\M)\) is
\[
 \lay{u}:=\{x\in M:\tau(x)=u\}.
\]
For a set \(A\subseteq\Sk(\M)\), put
\[
 \layer{A}:=\{x\in M:\tau(x)\in A\}.
\]
If \(A\) is a block of a skeleton partition, \(\layer{A}\) is the corresponding component.

\begin{lemma}[Basic positive-idempotent facts]\label{lem:basic-positive-idempotent-facts}
Let \(u\in\Sk(\M)\).
Then, for every \(x\in M\), \(x\le xu\) and \(x/u\le x\).
If \(u\le\tau(x)\), then \(xu=x=ux\).
Moreover, every positive idempotent is central, and if \(u\) is central, then
\[
 x/u=u\backslash x .
\]
Finally, \(\tau(u)=u\), and the product of positive idempotents is their join in the positive-idempotent skeleton.
Thus, for \(u,v\in\Sk(\M)\),
\[
 uv=vu=u\vee v .
\]
\end{lemma}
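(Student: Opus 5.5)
The plan is to derive every assertion of the lemma directly from residuation, positivity and idempotency, in the order in which they are stated. Each later step uses only the earlier ones. I will use throughout that multiplication is isotone in each argument, which is standard for residuated structures: if \(a\le b\), then \(bc\le bc\) gives \(b\le bc/c\), hence \(a\le bc/c\) and so \(ac\le bc\).

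\emph{First claims.} The inequality \(x\le xu\) is just positivity of \(u\). For \(x/u\le x\), residuation applied to \(x/u\le x/u\) gives \((x/u)u\le x\). Positivity then yields \(x/u\le (x/u)u\le x\). Now assume \(u\le\tau(x)\). Isotonicity gives \(xu\le x\tau(x)=x(x\backslash x)\le x\), and positivity gives \(x\le xu\), so \(xu=x\). Symmetrically, \(ux\le \tau(x)x=(x/x)x\le x\), where balancedness lets us use the form \(\tau(x)=x/x\) here. Together with \(x\le ux\) this gives \(ux=x\).

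\emph{Centrality.} This is the step I expect to need a small trick. Fix \(x\) and set \(a=xu\). From \(au=xuu=xu=a\) we get \(au\le a\), hence \(u\le a\backslash a=\tau(a)\). By the previous part, \(ua=a\), that is, \(uxu=xu\). Similarly, put \(b=ux\). From \(ub=b\) we get \(u\le b/b=\tau(b)\), so \(bu=b\), that is, \(uxu=ux\). Comparing the two equations gives \(xu=ux\).

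\emph{Consequences of centrality.} For central \(u\),
\[
z\le x/u\iff zu\le x\iff uz\le x\iff z\le u\backslash x,
\]
so \(x/u=u\backslash x\).

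For \(\tau(u)=u\): from \(uu\le u\) we get \(u\le u\backslash u=\tau(u)\). Conversely, positivity of \(u\), centrality of \(u\) and residuation give
\[
\tau(u)\le \tau(u)u=u\,\tau(u)=u(u\backslash u)\le u .
\]

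\emph{Products are joins.} Take \(u,v\in\Sk(\M)\).
\begin{itemize}
\item \(uv\) is idempotent: by centrality, \((uv)^2=uuvv=uv\).
\item \(uv\) is positive: \(z\le zu\le zuv\), and \(z\le vz\le uvz\).
\item \(uv\) is an upper bound of \(u\) and \(v\): \(u\le uv\) by positivity of \(v\), and \(v\le uv\) by positivity of \(u\).
\item \(uv\) is the least upper bound in \(\Sk(\M)\): if \(w\in\Sk(\M)\) with \(u,v\le w\), isotonicity gives \(uv\le ww=w\).
\end{itemize}
Hence \(uv=u\vee v\) in \(\Sk(\M)\), and \(uv=vu\) follows from centrality.
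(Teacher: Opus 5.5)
Your proposal is correct and follows essentially the same route as the paper: positivity and residuation give the first claims, centrality comes from \(uxu=xu\) and \(uxu=ux\), and \(uv=u\vee v\) follows from idempotence, inherited positivity and isotonicity. The only cosmetic differences are that you obtain \(uxu=xu\) and \(uxu=ux\) by invoking the already-proved implication \(u\le\tau(a)\Rightarrow ua=a=au\), whereas the paper redoes that balancedness-plus-positivity argument inline. You also get \(\tau(u)\le u\) via centrality and right positivity, whereas the paper uses left positivity \(u\backslash u\le u(u\backslash u)\) directly.
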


\begin{proof}
The inequality \(x\le xu\) is positivity.
Also, \((x/u)u\le x\) by residuation, and positivity gives \(x/u\le (x/u)u\); hence \(x/u\le x\).

Since \(\tau(x)=x/x=x\backslash x\), residuation gives \(x\tau(x)\le x\) and \(\tau(x)x\le x\), while positivity gives the reverse inequalities.
Hence \(x\tau(x)=x=\tau(x)x\).
If \(u\le\tau(x)\), then isotonicity gives \(xu\le x\tau(x)=x\) and \(ux\le\tau(x)x=x\), while positivity gives \(x\le xu\) and \(x\le ux\).
Therefore \(xu=x=ux\).

We prove centrality.
Let \(u\in\Sk(\M)\).
Since \((xu)u=xu\), residuation gives \(u\le (xu)\backslash(xu)\).
By balancedness, \((xu)\backslash(xu)=(xu)/(xu)\), so \(u\le (xu)/(xu)\), and hence \(u(xu)\le xu\).
Positivity gives the reverse inequality, so \(uxu=xu\).
Similarly, from \(u(ux)=ux\) one obtains \(u\le (ux)/(ux)=(ux)\backslash(ux)\), and therefore \((ux)u\le ux\); positivity gives the reverse inequality, so \(uxu=ux\).
Hence \(ux=uxu=xu\), and \(u\) is central.

If \(u\) is central, then, for every \(z\in M\), one has \(z\le x/u\) iff \(zu\le x\), iff \(uz\le x\), iff \(z\le u\backslash x\).
Thus \(x/u=u\backslash x\).

For a positive idempotent \(u\), the equality \(uu=u\) gives \(u\le u/u\) and \(u\le u\backslash u\) by residuation.
Conversely, \((u/u)u\le u\) and \(u(u\backslash u)\le u\), while positivity gives \(u/u\le(u/u)u\) and \(u\backslash u\le u(u\backslash u)\).
Hence \(u/u=u=u\backslash u\), so \(\tau(u)=u\).

Finally, if \(u,v\in\Sk(\M)\), then \(uv=vu\) by centrality.
The element \(uv\) is a positive idempotent: positivity is inherited from \(u\) and \(v\), and idempotence follows from \((uv)^2=uvuv=u^2v^2=uv\).
Moreover \(u\le uv\) and \(v\le vu=uv\), so \(uv\) is an upper bound of \(u\) and \(v\).
If \(g\in\Sk(\M)\) is any upper bound of \(u\) and \(v\), then \(uv\le gv\le gg=g\).
Thus \(uv\) is the least upper bound of \(u\) and \(v\), that is, \(uv=vu=u\vee v\).
\end{proof}

\section{Motivation: from primitive exactness to quotient-level coherence}
\label{sec:motivation_narrow_scope}

The primitive \(\tau\)-layers are the most immediate local-unit resolution of a balanced residuated partially ordered semigroup.
They are, however, too rigid for a general decomposition theory.
At primitive resolution one asks that the local unit of each product and of each residual be determined by the local units of the two inputs.
Equivalently, for \(x\in\lay{u}\) and \(y\in\lay{v}\), the elements
\[
 xy,
 \qquad
 x\backslash y,
 \qquad
 x/y
\]
should all have local units prescribed by \(u\) and \(v\) alone.
Thus primitive exactness is a well-definedness requirement: multiplication and the two residuals must have target local units depending only on the two source local units.

This is a strong exactness requirement.
It is not a formal consequence of balancedness or of residuation; rather, it is an additional structural property of the chosen local-unit presentation.
Known positive instances of primitive or chosen-resolution exactness come from special structural settings: local-unit decompositions of even and odd involutive \(\mathrm{FL}_e\)-chains \cite{JeneiGroupRepresentation}, the primitive full-skeleton po-monoid construction of \cite{BGJPSm}, steady or fibrant local-unit presentations in the semigroup setting of \cite{BGJPS}, finite commutative idempotent involutive residuated lattices \cite{JipsenTuytValota2021}, and related locally integral involutive structures \cite{GilFerezJipsenSugimoto2023,GilFerezJipsenLodhia2023,JipsenSugimoto2022,Sugimoto2024LocallyIntegral}.
These are highly constrained situations; for instance, idempotency, involutivity, and local integrality each impose substantial algebraic restrictions, and hence strongly constrain the local-unit geometry.
Outside such special settings, verifying or refuting the product and residual \(\tau\)-value laws is a substantive problem.
Accordingly, the primitive \(\tau\)-fibres satisfy these laws only in special structural situations; in general balanced residuated partially ordered semigroups the laws can fail.

The construction below replaces primitive exactness by quotient-level exactness.
Instead of requiring the singleton \(\tau\)-fibres to satisfy product and residual local-unit laws, we allow the positive-idempotent skeleton to be coarsened.
The required coarsening is not chosen externally.
It is the finest quotient on which the local units of products and of both residuals are determined by the quotient-local units of the inputs.
This quotient is the residual coherentization \(\Cr(\M)\).
The conceptual shift is therefore
\[
 \text{primitive local-unit exactness}
 \quad\leadsto\quad
 \text{canonical quotient-level local-unit exactness}.
\]
The rest of the paper shows that this replacement loses no information: the quotient skeleton, the induced component algebras, and the product-shadow and residual-shadow maps reconstruct the original ordered residuated semigroup.

\section{Skeleton partitions and \texorpdfstring{\(\tau\)}{tau}-residuation coherence}
\label{sec:tau_residuation_coherence}

Let
\[
 S=\Sk(\M).
\]
By the standing assumption, the positive idempotents are central and their product is their join.
Thus \(S\) is a join-semilattice, and we write
\[
 u\vee v=uv
\]
for \(u,v\in S\).
A skeleton partition means a partition of \(S\).
If \(\Theta\) is such a partition, write \(u\equiv_\Theta v\) when \(u\) and \(v\) lie in the same \(\Theta\)-block, and write \([u]_\Theta\) for the \(\Theta\)-block of \(u\).

We order skeleton partitions by refinement:
\[
 \Theta\le \Phi
 \quad\Longleftrightarrow\quad
 u\equiv_\Theta v\Longrightarrow u\equiv_\Phi v
 \quad(u,v\in S).
\]
Thus larger partitions in this order are coarser.
The symbol \(\bigvee\) denotes the join in the lattice of partitions, equivalently the partition whose associated equivalence relation is generated by the union of the associated equivalence relations.
This convention is important below: the canonical coherent partition is the least coarsening, equivalently the finest partition, satisfying the required coherence condition.

For \(A\subseteq S\), put
\[
 \layer{A}:=\tau^{-1}(A)=\{x\in M:\tau(x)\in A\}.
\]
Thus \(\layer{A}\) is a union of primitive \(\tau\)-layers, not an independently chosen subset of \(M\).

\begin{definition}[\(\tau\)-saturated operation-output set]
\label{def:tau-saturated-operation-output-set}
For \(A,B\subseteq S\), define the \emph{\(\tau\)-saturated operation-output set} of the pair \((A,B)\) by
\[
 A\star_{\tau} B
 :=
 \{\tau(xy),\ \tau(x\backslash y),\ \tau(x/y):
      x\in\layer{A},\ y\in\layer{B}\}\subseteq S .
\]
\end{definition}

\begin{lemma}[Monotonicity of $\tau$-saturated operation-output sets]
\label{lem:tau-saturated-operation-output-monotone}
If \(A\subseteq A'\) and \(B\subseteq B'\), then
\[
 A\star_{\tau} B
 \subseteq
 A'\star_{\tau} B'.
\]
\end{lemma}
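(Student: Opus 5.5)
The claim follows directly from Definition~\ref{def:tau-saturated-operation-output-set}, and the plan is an element-chasing argument built on the monotonicity of preimages. The first step is to note that the assignment \(A\mapsto\layer{A}=\tau^{-1}(A)\) is monotone with respect to inclusion. Indeed, if \(A\subseteq A'\) and \(x\in\layer{A}\), then \(\tau(x)\in A\subseteq A'\), so \(x\in\layer{A'}\). Therefore \(\layer{A}\subseteq\layer{A'}\), and in the same way \(\layer{B}\subseteq\layer{B'}\).

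The second step is to take an arbitrary element \(w\in A\star_{\tau}B\). By definition, there are \(x\in\layer{A}\) and \(y\in\layer{B}\) such that \(w\) equals one of \(\tau(xy)\), \(\tau(x\backslash y)\), or \(\tau(x/y)\). The first step gives \(x\in\layer{A'}\) and \(y\in\layer{B'}\). The same pair \((x,y)\) and the same one of the three operations then exhibit \(w\) as an element of \(A'\star_{\tau}B'\). Since \(w\) was arbitrary, this proves the inclusion.

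No case split on the three operations is really needed. Each output is determined by the pair \((x,y)\) alone, so enlarging the index sets \(\layer{A}\times\layer{B}\) can only enlarge the set of outputs. There is no genuine obstacle here. The only point that needs care is that \(\layer{A}\) is defined as a \(\tau\)-preimage rather than an independently chosen set, and that definition is exactly what makes it monotone in \(A\).
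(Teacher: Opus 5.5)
Your proposal is correct and follows the paper's own proof: the paper likewise uses \(L_A\subseteq L_{A'}\) and \(L_B\subseteq L_{B'}\), notes that every product or residual output from the smaller layers is also an output from the larger ones, and applies \(\tau\). You only spell out the preimage-monotonicity step and the element chase that the paper leaves implicit.
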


\begin{proof}
The inclusions \(\layer{A}\subseteq\layer{A'}\) and \(\layer{B}\subseteq\layer{B'}\) imply that every product or residual output obtained from \(\layer{A}\) and \(\layer{B}\) is also obtained from \(\layer{A'}\) and \(\layer{B'}\).
Applying \(\tau\) gives the inclusion.
\end{proof}

\begin{remark}
For every \(A\subseteq S\), one has
\[
 A\subseteq A\star_{\tau} A.
\]
Indeed, if \(u\in A\), then \(u\in\layer{A}\), and
\[
 u=\tau(uu)=\tau(u\backslash u)=\tau(u/u)
 \in A\star_{\tau} A.
\]
\end{remark}

\begin{definition}[Output blocks]
\label{def:output-set}
Let \(\Theta\) be a skeleton partition of \(S\), and let \(A,B\in \Theta\).
Define the set of \emph{output blocks} of the pair \((A,B)\) by
\[
 \operatorname{Out}_{\Theta}(A,B)
 :=
 \{C\in \Theta:
      C\cap (A\star_{\tau} B)\ne\varnothing\}.
\]
Since \(A\star_{\tau} B\ne\varnothing\), the set \(\operatorname{Out}_{\Theta}(A,B)\) is nonempty.
\end{definition}

\begin{definition}[\(\tau\)-multiplication and \(\tau\)-residuation coherence]
A skeleton partition \(\Theta\) is \emph{\(\tau\)-multiplication-coherent} if, for every pair of blocks \(A,B\in \Theta\), the set
\[
 \{\tau(xy):x\in\layer{A},\ y\in\layer{B}\}
\]
is contained in a single \(\Theta\)-block.

A skeleton partition \(\Theta\) is \emph{\(\tau\)-residuation-coherent} if
\[
 \operatorname{Out}_{\Theta}(A,B)
\]
is a singleton for every pair of blocks \(A,B\in \Theta\).
Equivalently, multiplication and the two residuals have a well-defined target component with respect to \(\Theta\): the \(\tau\)-component of \(xy\), \(x\backslash y\), and \(x/y\) is determined by the two input blocks alone.
If \(\Theta\) is \(\tau\)-residuation-coherent, we denote by
\[
 A\ast_\Theta B
\]
the unique block in \(\operatorname{Out}_{\Theta}(A,B)\).
\end{definition}

\begin{proposition}[Immediate consequences of \(\tau\)-residuation coherence]
\label{prop:immediate-consequences-tau-res-coherence}
Every \(\tau\)-residuation-coherent skeleton partition is \(\tau\)-multiplication-coherent.
Moreover, if \(\Theta\) is \(\tau\)-residuation-coherent and \(x\in\layer{A}\), \(y\in\layer{B}\), then
\[
 xy,
 \quad
 x\backslash y,
 \quad
 x/y
 \in \layer{A\ast_\Theta B}.
\]
For positive idempotents \(u\in A\) and \(v\in B\), one has
\[
 u\vee v=uv\in A\ast_\Theta B.
\]
\end{proposition}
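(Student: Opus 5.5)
The proposition follows by unwinding the definitions; I take its three claims in the order stated. Throughout, \(\Theta\) is \(\tau\)-residuation-coherent, \(A,B\in\Theta\), and \(A\ast_\Theta B\) is the unique block in \(\operatorname{Out}_{\Theta}(A,B)\).

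\emph{Multiplication-coherence.} Fix blocks \(A,B\). By Definition~\ref{def:tau-saturated-operation-output-set}, the set \(\{\tau(xy):x\in\layer{A},\ y\in\layer{B}\}\) is a subset of \(A\star_\tau B\). Every element of \(A\star_\tau B\) lies in some \(\Theta\)-block. That block meets \(A\star_\tau B\), so it belongs to \(\operatorname{Out}_\Theta(A,B)\). Since \(\operatorname{Out}_\Theta(A,B)=\{A\ast_\Theta B\}\), we get \(A\star_\tau B\subseteq A\ast_\Theta B\). In particular, the product outputs lie in the single block \(A\ast_\Theta B\), which is exactly \(\tau\)-multiplication-coherence.

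\emph{Membership of outputs.} For \(x\in\layer{A}\) and \(y\in\layer{B}\), the three values \(\tau(xy)\), \(\tau(x\backslash y)\), \(\tau(x/y)\) belong to \(A\star_\tau B\). By the inclusion \(A\star_\tau B\subseteq A\ast_\Theta B\) just shown, they lie in \(A\ast_\Theta B\). By the definition of \(\layer{\cdot}\), this means \(xy,\ x\backslash y,\ x/y\in\layer{A\ast_\Theta B}\).

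\emph{Positive idempotents.} Let \(u\in A\) and \(v\in B\). By Lemma~\ref{lem:basic-positive-idempotent-facts}, \(\tau(u)=u\) and \(\tau(v)=v\), so \(u\in\layer{A}\) and \(v\in\layer{B}\). The previous step then gives \(uv\in\layer{A\ast_\Theta B}\), i.e.\ \(\tau(uv)\in A\ast_\Theta B\). By the same lemma, \(uv=u\vee v\) is a positive idempotent, hence \(\tau(uv)=uv\). Therefore \(u\vee v=uv\in A\ast_\Theta B\).

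None of these steps is a real obstacle. The only point needing care is the final one: one must pass from membership of \(uv\) in the layer \(\layer{A\ast_\Theta B}\) to membership of \(uv\) in the skeleton block \(A\ast_\Theta B\) itself. This uses the fact that \(\tau\) fixes positive idempotents, together with the fact that their product is again a positive idempotent.
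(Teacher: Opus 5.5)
Your proposal is correct and follows essentially the same route as the paper: the product outputs are among the operation outputs, all outputs land in the unique output block \(A\ast_\Theta B\), and \(u\vee v=uv=\tau(uv)\) is one of those outputs. You are slightly more explicit than the paper in spelling out the inclusion \(A\star_\tau B\subseteq A\ast_\Theta B\) and in routing the last claim through \(\layer{A\ast_\Theta B}\), but it is the same argument.
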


\begin{proof}
The multiplication assertion follows because the product-output values are among the operation-output values.
The component-target assertion is exactly the definition of the target block \(A\ast_\Theta B\).
Finally, if \(u,v\in S\), then \(\tau(u)=u\), \(\tau(v)=v\), and
\[
 uv=u\vee v,
\]
because positive idempotents are central and their product is their join in the positive-idempotent skeleton.
Thus
\[
 u\vee v=\tau(uv)\in A\star_{\tau} B,
\]
and hence \(u\vee v\in A\ast_\Theta B\).
\end{proof}

\begin{proposition}[Induced quotient skeleton]
\label{prop:induced-quotient-skeleton}
Let \(\Theta\) be \(\tau\)-residuation-coherent.
Then the formula
\[
 A\vee B:=[u\vee v]_\Theta
 \qquad(u\in A,\ v\in B)
\]
is independent of the representatives \(u\in A\), \(v\in B\), and defines a join-semilattice structure on \(\Theta\).
Moreover,
\[
 A\vee B=A\ast_\Theta B.
\]
\end{proposition}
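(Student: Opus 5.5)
The plan is to derive everything from Proposition~\ref{prop:immediate-consequences-tau-res-coherence}, which already gives \(u\vee v\in A\ast_\Theta B\) for all \(u\in A\), \(v\in B\).

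\emph{Step 1 (well-definedness and \(A\vee B=A\ast_\Theta B\)).} Fix blocks \(A,B\in\Theta\) and representatives \(u,u'\in A\), \(v,v'\in B\). By the proposition, both \(u\vee v\) and \(u'\vee v'\) lie in the single block \(A\ast_\Theta B\), so \([u\vee v]_\Theta=[u'\vee v']_\Theta=A\ast_\Theta B\). This proves independence of representatives and the identity \(A\vee B=A\ast_\Theta B\) at once.

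\emph{Step 2 (semilattice laws).} Once \(A\vee B\) is well defined, each law can be checked on representatives, using that \(\vee\) is a semilattice operation on \(S\) (Lemma~\ref{lem:basic-positive-idempotent-facts}).
\begin{itemize}
\item Idempotence: \(A\vee A=[u\vee u]_\Theta=[u]_\Theta=A\).
\item Commutativity: \(A\vee B=[u\vee v]_\Theta=[v\vee u]_\Theta=B\vee A\).
\item Associativity: pick \(u\in A\), \(v\in B\), \(w\in C\). Since \(u\vee v\in A\vee B\), we may use it as a representative of \(A\vee B\), so \((A\vee B)\vee C=[(u\vee v)\vee w]_\Theta\). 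Symmetrically, \(A\vee(B\vee C)=[u\vee(v\vee w)]_\Theta\). These agree by associativity in \(S\).
\end{itemize}
Hence \(\langle\Theta,\vee\rangle\) is a join-semilattice, with induced order \(A\le B\) iff \(A\vee B=B\).

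\emph{Main obstacle.} The only delicate point is the representative independence in Step 1, and it is fully handled by the singleton-output clause of \(\tau\)-residuation coherence. Indeed, \(\tau\)-multiplication coherence alone would also suffice here, because \(u\vee v=\tau(uv)\) is a product output. Associativity depends on being allowed to choose \(u\vee v\) as a representative of \(A\vee B\), and this is justified by Step 1. Nothing beyond these earlier results is needed.
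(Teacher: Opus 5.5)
Your proposal is correct and follows the paper's own proof. Both read representative-independence and the identity \(A\vee B=A\ast_\Theta B\) directly off Proposition~\ref{prop:immediate-consequences-tau-res-coherence}, then inherit the semilattice laws from \(S\). Your explicit associativity step, which uses \(u\vee v\) as a representative of \(A\vee B\), spells out what the paper leaves implicit.
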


\begin{proof}
By Proposition~\ref{prop:immediate-consequences-tau-res-coherence}, every representative-wise join \(u\vee v=uv\), with \(u\in A\) and \(v\in B\), belongs to the block \(A\ast_\Theta B\).
Therefore all such joins lie in one \(\Theta\)-block, and the displayed formula is well-defined.
Commutativity, associativity, and idempotence are inherited from the join operation on \(S\), because the quotient operation has been shown to be representative-independent.
Thus \(\Theta\) is a join-semilattice, and its join is precisely the operation \((A,B)\mapsto A\ast_\Theta B\).
\end{proof}

\begin{definition}[Residual connected-component coarsening]
\label{def:residual-component-coarsening}
Let \(\Theta\) be a skeleton partition of \(S\).
Define a graph on the set of blocks of \(\Theta\) by joining two distinct blocks \(C,D\in \Theta\) whenever there exist blocks \(A,B\in \Theta\) such that
\[
 C,D\in \operatorname{Out}_{\Theta}(A,B).
\]
Equivalently, the edge condition says that both \(C\) and \(D\) meet the same saturated operation-output set
\[
 A\star_{\tau} B.
\]
We denote by
\[
 \operatorname{CC}_{\mathrm r}(\Theta)
\]
the skeleton partition obtained by merging precisely those blocks that lie in the same connected component of this graph.
\end{definition}

\begin{lemma}[Monotonicity of residual coarsening]
\label{lem:CCr-monotone}
If a skeleton partition \(\Theta\) refines a skeleton partition \(\Phi\), then
\[
 \operatorname{CC}_{\mathrm r}(\Theta)
 \le
 \operatorname{CC}_{\mathrm r}(\Phi).
\]
\end{lemma}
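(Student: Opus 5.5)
To establish $\operatorname{CC}_{\mathrm r}(\Theta)\le\operatorname{CC}_{\mathrm r}(\Phi)$, it suffices to show that whenever $u\equiv_{\operatorname{CC}_{\mathrm r}(\Theta)}v$, also $u\equiv_{\operatorname{CC}_{\mathrm r}(\Phi)}v$. By Definition~\ref{def:residual-component-coarsening}, $u$ and $v$ are then connected by a finite path of $\Theta$-blocks
\[
 [u]_\Theta=C_0,\ C_1,\ \dots,\ C_n\ni v,
\]
where consecutive blocks are either equal or joined by an edge. Since $\equiv_{\operatorname{CC}_{\mathrm r}(\Phi)}$ is an equivalence relation, it is enough to handle a single edge. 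Concretely, if $C,D\in\Theta$ are adjacent in the $\Theta$-graph, we must show that every element of $C$ and every element of $D$ lie in the same $\operatorname{CC}_{\mathrm r}(\Phi)$-block.

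Two observations settle this. First, each $\Theta$-block $C$ lies inside a unique $\Phi$-block $\widehat C$, because $\Theta$ refines $\Phi$. Consequently all points of $C$ are $\Phi$-equivalent, and hence $\operatorname{CC}_{\mathrm r}(\Phi)$-equivalent, since $\operatorname{CC}_{\mathrm r}(\Phi)$ only merges $\Phi$-blocks and so $\Phi\le\operatorname{CC}_{\mathrm r}(\Phi)$. Second, suppose $C$ and $D$ are adjacent. Then there are $A,B\in\Theta$ with $C\cap(A\star_\tau B)\neq\varnothing$ and $D\cap(A\star_\tau B)\neq\varnothing$. Let $\widehat A,\widehat B$ be the $\Phi$-blocks containing $A$ and $B$. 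By Lemma~\ref{lem:tau-saturated-operation-output-monotone}, $A\star_\tau B\subseteq\widehat A\star_\tau\widehat B$. It follows that $\widehat C$ and $\widehat D$ both meet $\widehat A\star_\tau\widehat B$, that is, $\widehat C,\widehat D\in\operatorname{Out}_\Phi(\widehat A,\widehat B)$.

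Now there are two cases. If $\widehat C\neq\widehat D$, they are joined by an edge of the $\Phi$-graph. If $\widehat C=\widehat D$, nothing needs to be shown. In either case $\widehat C$ and $\widehat D$ lie in the same connected component of the $\Phi$-graph, so $C\cup D$ is contained in a single $\operatorname{CC}_{\mathrm r}(\Phi)$-block. Chaining along the path then gives the result.

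There is no real obstacle here. The only point needing care is the case where $\widehat C=\widehat D$, since the graph is defined on distinct blocks; this case is trivial. One should also note that the monotonicity lemma is applied to the layer sets $\layer{A}\subseteq\layer{\widehat A}$ and $\layer{B}\subseteq\layer{\widehat B}$, which is exactly what Lemma~\ref{lem:tau-saturated-operation-output-monotone} provides.
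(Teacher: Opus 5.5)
Your proposal is correct and follows essentially the same route as the paper. Both arguments push each edge of the \(\Theta\)-graph forward to the containing \(\Phi\)-blocks using monotonicity of \(\star_\tau\), where it either collapses to a single block or becomes an edge of the \(\Phi\)-graph, so connected components map into connected components. Your explicit remark that \(\Phi\le\operatorname{CC}_{\mathrm r}(\Phi)\) makes precise the within-block identifications that the paper leaves implicit.
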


\begin{proof}
Let \(C,D\in \Theta\) be joined by an edge in the graph defining \(\operatorname{CC}_{\mathrm r}(\Theta)\).
Then there exist \(A,B\in \Theta\) such that
\[
 C,D\in\operatorname{Out}_{\Theta}(A,B).
\]
Let \(A^\Phi,B^\Phi,C^\Phi,D^\Phi\) be the \(\Phi\)-blocks containing \(A,B,C,D\), respectively.
By Lemma~\ref{lem:tau-saturated-operation-output-monotone},
\[
 A\star_{\tau} B
 \subseteq
 A^\Phi\star_{\tau} B^\Phi.
\]
Hence
\[
 C^\Phi,D^\Phi\in
 \operatorname{Out}_{\Phi}(A^\Phi,B^\Phi).
\]
If \(C^\Phi=D^\Phi\), then the edge collapses to one block of \(\Phi\).
If \(C^\Phi\ne D^\Phi\), then \(C^\Phi\) and \(D^\Phi\) are joined by an edge in the graph defining \(\operatorname{CC}_{\mathrm r}(\Phi)\).
Thus every edge for \(\Theta\) maps either to a single block or to an edge for \(\Phi\), and therefore connected components for \(\Theta\) map into connected components for \(\Phi\).
This proves the refinement assertion.
\end{proof}

\begin{proposition}[Fixed points of residual coarsening]
\label{prop:residual-component-coarsening-fixed-points}
For a skeleton partition \(\Theta\) of \(S\), the following are equivalent:
\begin{enumerate}
\item\label{item:res-coarsening-fixed}
\[
 \operatorname{CC}_{\mathrm r}(\Theta)=\Theta;
\]
\item\label{item:res-coarsening-coherent}
\(\Theta\) is \(\tau\)-residuation-coherent.
\end{enumerate}
\end{proposition}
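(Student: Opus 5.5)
The plan is to unwind both conditions into statements about the graph of Definition~\ref{def:residual-component-coarsening}. First I will record a tautology: by construction, \(\operatorname{CC}_{\mathrm r}(\Theta)\) is always a coarsening of \(\Theta\), because it merges whole \(\Theta\)-blocks. Moreover, \(\operatorname{CC}_{\mathrm r}(\Theta)=\Theta\) holds exactly when every connected component of the block graph is a single vertex. Since components are generated by edges, this is the same as saying the graph has no edges at all. So the proposition reduces to the claim
\[
 \Theta \text{ is } \tau\text{-residuation-coherent}
 \iff
 \text{the block graph of }\Theta\text{ has no edges.}
\]

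For (2)\(\Rightarrow\)(1), assume \(\Theta\) is \(\tau\)-residuation-coherent. Suppose \(C,D\in\Theta\) are distinct and joined by an edge. Then there are blocks \(A,B\) with \(C,D\in\operatorname{Out}_\Theta(A,B)\). This makes \(\operatorname{Out}_\Theta(A,B)\) have at least two elements, contradicting that it is a singleton. Hence the graph is edgeless, and \(\operatorname{CC}_{\mathrm r}(\Theta)=\Theta\).

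For (1)\(\Rightarrow\)(2), assume \(\operatorname{CC}_{\mathrm r}(\Theta)=\Theta\), so the graph has no edges. Fix blocks \(A,B\in\Theta\). By the remark following Definition~\ref{def:output-set}, \(\operatorname{Out}_\Theta(A,B)\) is nonempty. This rests on \(A\star_\tau B\neq\varnothing\): blocks of a partition are nonempty, any \(u\in A\) lies in \(\layer{A}\) since \(\tau(u)=u\) by Lemma~\ref{lem:basic-positive-idempotent-facts}, and similarly for \(B\), so at least one output exists. If \(\operatorname{Out}_\Theta(A,B)\) contained two distinct blocks \(C\neq D\), they would be joined by an edge, which is impossible. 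Hence \(\operatorname{Out}_\Theta(A,B)\) is a singleton for every pair of blocks, which is exactly \(\tau\)-residuation coherence.

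There is no genuine obstacle. The only points needing care are the identification of "fixed point" with "edgeless graph" and the nonemptiness of the output sets. For the first, I must check that equality of partitions forces each connected component to be a single vertex. This holds because merging any component with at least two blocks would strictly coarsen \(\Theta\). The nonemptiness is what upgrades "at most one output block" to "exactly one", and it uses that skeleton blocks are nonempty and consist of elements fixed by \(\tau\).
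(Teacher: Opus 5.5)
Your proof is correct and takes essentially the same approach as the paper: in both directions, an edge between distinct blocks exists exactly when some $\operatorname{Out}_\Theta(A,B)$ contains two blocks. Your explicit checks that the fixed-point condition means the graph has no edges, and that $\operatorname{Out}_\Theta(A,B)$ is nonempty, state openly what the paper's proof uses implicitly; the nonemptiness comes from the remark after Definition~\ref{def:output-set}.
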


\begin{proof}
Assume first that \(\operatorname{CC}_{\mathrm r}(\Theta)=\Theta\).
Let \(A,B\in \Theta\).
If
\[
 C,D\in\operatorname{Out}_{\Theta}(A,B),
\]
then \(C\) and \(D\) are joined by an edge in the graph defining \(\operatorname{CC}_{\mathrm r}(\Theta)\).
Since the partition is unchanged, no edge can join two distinct blocks.
Hence \(C=D\).
Thus \(\operatorname{Out}_{\Theta}(A,B)\) is a singleton, proving \(\tau\)-residuation coherence.

Conversely, assume that \(\Theta\) is \(\tau\)-residuation-coherent.
Then every output-block set \(\operatorname{Out}_{\Theta}(A,B)\) is a singleton.
Therefore no edge in the graph defining \(\operatorname{CC}_{\mathrm r}(\Theta)\) joins two distinct blocks.
Hence \(\operatorname{CC}_{\mathrm r}(\Theta)=\Theta\).
\end{proof}

\begin{definition}[Residual coherentization]
\label{con:canonical-tau-residuation-closure}
Let
\[
 \Theta_0:=\{\{u\}:u\in S\}
\]
be the singleton skeleton partition.
Define recursively
\[
 \Theta_{n+1}:=\operatorname{CC}_{\mathrm r}(\Theta_n)
 \qquad(n<\omega),
\]
and let \(\equiv_n\) be the equivalence relation on \(S\) whose classes are the blocks of \(\Theta_n\).
Since every step only merges blocks, the relations \(\equiv_n\) form an increasing sequence.
Set
\[
 u\equiv_{\omega}^{\mathrm r} v
 \quad\Longleftrightarrow\quad
 u\equiv_n v\ \text{for some }n<\omega .
\]
The union relation \(\equiv_{\omega}^{\mathrm r}\) is an equivalence relation, because an increasing union of equivalence relations is again an equivalence relation.
Define
\[
 \Cr(\M):=S/{\equiv_{\omega}^{\mathrm r}}.
\]
We call \(\Cr(\M)\) the  \emph{canonical \(\tau\)-residuation-coherent partition} of \(\M\), or equivalently the \emph{residual coherentization} of \(\M\).
The induced family
\[
 \mathcal L_{\mathrm r}(\M):=
 \{\layer{A}:A\in\Cr(\M)\}
\]
is the \emph{canonical \(\tau\)-residuation-coherent decomposition} of \(\M\), or equivalently the \emph{residual coherentization decomposition}.
If the iteration stabilizes at a finite stage, then this omega-limit is the fixed point at that stage.
The induced map
\[
 \tau_{\mathrm r}(x):=[\tau(x)]_{\Cr(\M)}
\]
will be called the \emph{canonical residual coherentization of \(\tau\)}.
\end{definition}

\begin{definition}[\(\tau\)-residuation-cohesive algebra]
\label{def:tau-residuation-cohesive}
A balanced residuated partially ordered semigroup \(\M\) is called \emph{\(\tau\)-residuation-cohesive} if its \(\tau\)-residuation-coherent partition is trivial, that is,
\[
 \Cr(\M)=\{S\}.
\]
Equivalently, its \(\tau\)-residuation-coherent decomposition is trivial, that is, it has a single component:
\[
 \mathcal L_{\mathrm r}(\M)=\{M\}.
\]
\end{definition}

\begin{proposition}[Cohesive algebras are terminal for residual coherentization]
\label{prop:tau-residuation-cohesive-terminal}
Let \(\A\) be a balanced residuated partially ordered semigroup.
One application of residual coherentization to \(\A\) produces no proper component refinement if and only if \(\A\) is \(\tau\)-residuation-cohesive.
Equivalently, in the recursive residual coherentization procedure, a component is terminal precisely when its induced component algebra is \(\tau\)-residuation-cohesive.
\end{proposition}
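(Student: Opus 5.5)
The statement is essentially definitional, so I plan to prove it by unwinding what ``one application of residual coherentization produces no proper component refinement'' means. I read it as: the residual coherentization decomposition \(\mathcal L_{\mathrm r}(\A)\) consists of a single component, i.e.\ the component \(A\) (the whole algebra) is not split into two or more components. Under this reading, the equivalence is exactly Definition~\ref{def:tau-residuation-cohesive}. The only point that needs checking is that the two formulations in that definition agree, namely \(\Cr(\A)=\{S\}\) if and only if \(\mathcal L_{\mathrm r}(\A)=\{A\}\).

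For that check, recall that \(\layer{B}=\tau^{-1}(B)\), and every block \(B\) of a skeleton partition is nonempty. Each \(u\in B\) satisfies \(\tau(u)=u\) by Lemma~\ref{lem:basic-positive-idempotent-facts}, so \(u\in\layer{B}\) and hence \(\layer{B}\neq\varnothing\). Distinct blocks give disjoint, nonempty layers. Therefore the map \(B\mapsto\layer{B}\) is a bijection from \(\Cr(\A)\) onto \(\mathcal L_{\mathrm r}(\A)\). In particular, \(\Cr(\A)\) has exactly one block if and only if \(\mathcal L_{\mathrm r}(\A)\) has exactly one member, and that member is then \(\tau^{-1}(S)=A\).

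For the second sentence, about the recursive procedure, I will formalize ``terminal'' as follows. A component \(\layer{C}\), viewed as an algebra \(\Comp{C}\), is terminal when applying residual coherentization to it yields no proper refinement. Applying the first part to \(\A=\Comp{C}\) then gives: terminal if and only if \(\Cr(\Comp{C})\) is trivial, that is, if and only if \(\Comp{C}\) is \(\tau\)-residuation-cohesive.

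This step needs \(\Comp{C}\) to be a balanced residuated partially ordered semigroup. That fact is only constructed in the next section, so I will state it as an assumption of the recursive formulation rather than prove it here. I would also note that the local units computed inside \(\Comp{C}\) should agree with \(\tau\) restricted to the component, but the equivalence itself uses nothing beyond applying the first part to \(\Comp{C}\) as an algebra in its own right.

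The main obstacle is interpretive rather than technical: the phrase ``no proper component refinement'' has to be pinned down. If one instead read it as \(\operatorname{CC}_{\mathrm r}(\Theta_0)=\Theta_0\), i.e.\ a fixed point at the first step, the claim would be false: by Proposition~\ref{prop:residual-component-coarsening-fixed-points}, that condition means the singleton partition is already coherent, which is primitive exactness and not cohesiveness. So I will explicitly adopt the reading ``the resulting decomposition has a single component'' and flag it as the intended meaning.
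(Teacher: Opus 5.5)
Your proposal is correct and follows the paper's own argument: both unwind the definitions, observing that the decomposition induced by \(\Cr(\A)\) has a single component exactly when \(\Cr(\A)=\{\Sk(\A)\}\), and then read terminality of a component as this condition for its component algebra. Your check that \(B\mapsto\layer{B}\) is a bijection (via \(\tau(u)=u\)), together with your explicit reliance on the component algebras being balanced residuated partially ordered semigroups, spells out steps that the paper's one-line proof leaves implicit.
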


\begin{proof}
Applying residual coherentization to \(\A\) produces the component family induced by \(\Cr(\A)\).
This family has one component exactly when \(\Cr(\A)\) is the one-block partition of \(\Sk(\A)\), which is precisely the definition of \(\tau\)-residuation-cohesiveness for \(\A\).
\end{proof}

\begin{theorem}[Finest residual coherentization]
\label{prop:canonicality-Cr}
The partition \(\Cr(\M)\) is the finest \(\tau\)-residuation-coherent skeleton partition of \(S\).
Equivalently, every \(\tau\)-residuation-coherent skeleton partition \(\Phi\) satisfies
\[
 \Cr(\M)\le \Phi.
\]
By Proposition~\ref{prop:induced-quotient-skeleton}, the block set \(\Cr(\M)\) is a join-semilattice and is also \(\tau\)-multiplication-coherent.
\end{theorem}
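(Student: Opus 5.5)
The proof splits into two independent claims: that \(\Cr(\M)\) is itself \(\tau\)-residuation-coherent, and that it refines every \(\tau\)-residuation-coherent partition \(\Phi\). The final sentence of the statement then follows directly from Proposition~\ref{prop:immediate-consequences-tau-res-coherence} and Proposition~\ref{prop:induced-quotient-skeleton} applied to \(\Theta=\Cr(\M)\).

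\emph{Minimality.} Fix a \(\tau\)-residuation-coherent partition \(\Phi\). I will show by induction on \(n\) that \(\Theta_n\le\Phi\) for every \(n<\omega\).
\begin{itemize}
\item The base case holds because the singleton partition \(\Theta_0\) refines everything.
\item For the inductive step, assume \(\Theta_n\le\Phi\). Lemma~\ref{lem:CCr-monotone} gives \(\Theta_{n+1}=\operatorname{CC}_{\mathrm r}(\Theta_n)\le\operatorname{CC}_{\mathrm r}(\Phi)\).
\item Proposition~\ref{prop:residual-component-coarsening-fixed-points} gives \(\operatorname{CC}_{\mathrm r}(\Phi)=\Phi\), since \(\Phi\) is coherent. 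Hence \(\Theta_{n+1}\le\Phi\).
\end{itemize}
Since \(\equiv_\omega^{\mathrm r}\) is the union of the \(\equiv_n\), each of which is contained in \(\equiv_\Phi\), we get \(\Cr(\M)\le\Phi\).

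\emph{Coherence of the limit.} This is the main point: one must show that the \(\omega\)-limit is a fixed point of \(\operatorname{CC}_{\mathrm r}\) even when no finite stage stabilizes. I would argue directly from the definition of \(\operatorname{Out}\) using a finiteness argument.

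Let \(A,B\in\Cr(\M)\) and suppose \(C,D\in\operatorname{Out}_{\Cr(\M)}(A,B)\). Then there are \(c\in C\cap(A\star_\tau B)\) and \(d\in D\cap(A\star_\tau B)\). Unwinding the definition, \(c=\tau(z)\) where \(z\) is one of \(xy\), \(x\backslash y\), \(x/y\), with \(a:=\tau(x)\in A\) and \(b:=\tau(y)\in B\). Similarly \(d=\tau(z')\) for an operation output \(z'\) of \(x',y'\) with \(a':=\tau(x')\in A\) and \(b':=\tau(y')\in B\).

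Because \(a\equiv_\omega^{\mathrm r}a'\) and \(b\equiv_\omega^{\mathrm r}b'\), there exist finite stages witnessing each equivalence. Since the relations \(\equiv_n\) increase, a single stage \(N\) witnesses both \(a\equiv_N a'\) and \(b\equiv_N b'\).

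Put \(A_N=[a]_{\Theta_N}\) and \(B_N=[b]_{\Theta_N}\). Then \(x,x'\in\layer{A_N}\) and \(y,y'\in\layer{B_N}\), so \(c,d\in A_N\star_\tau B_N\). It follows that \([c]_{\Theta_N}\) and \([d]_{\Theta_N}\) both belong to \(\operatorname{Out}_{\Theta_N}(A_N,B_N)\). They are therefore either equal or joined by an edge in the graph defining \(\operatorname{CC}_{\mathrm r}(\Theta_N)\). In either case \(c\equiv_{N+1}d\), so \(c\equiv_\omega^{\mathrm r}d\), and hence \(C=D\).

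Thus every \(\operatorname{Out}_{\Cr(\M)}(A,B)\) is a singleton; it is nonempty by Definition~\ref{def:output-set}. The crucial observation is that each violation of coherence involves only finitely many skeleton elements, namely \(a,a',b,b',c,d\), so it is already detected, and repaired, at a finite stage.
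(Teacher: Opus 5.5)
Your proposal is correct and follows essentially the same route as the paper. Minimality is proved by induction using Lemma~\ref{lem:CCr-monotone} together with the fixed-point property \(\operatorname{CC}_{\mathrm r}(\Phi)=\Phi\) from Proposition~\ref{prop:residual-component-coarsening-fixed-points}, and coherence of the \(\omega\)-limit is proved by pushing the two input equivalences to a common finite stage \(N\), where both output blocks lie in \(\operatorname{Out}_{\Theta_N}(A_N,B_N)\) and are therefore merged in \(\Theta_{N+1}\); you only present the two halves in the opposite order.
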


\begin{proof}
We first prove that \(\Cr(\M)\) is \(\tau\)-residuation-coherent.
Let \(A,B\in\Cr(\M)\), and let
\[
 u,v\in A\star_{\tau} B.
\]
Choose
\[
 x,x'\in\layer{A},
 \qquad
 y,y'\in\layer{B}
\]
such that \(u\) and \(v\) are among the corresponding operation-output local units
\[
 \tau(xy),\ \tau(x\backslash y),\ \tau(x/y),
 \qquad
 \tau(x'y'),\ \tau(x'\backslash y'),\ \tau(x'/y').
\]
Since \(\tau(x),\tau(x')\in A\), there is \(n_A<\omega\) such that
\[
 \tau(x)\equiv_{n_A}\tau(x').
\]
Since \(\tau(y),\tau(y')\in B\), there is \(n_B<\omega\) such that
\[
 \tau(y)\equiv_{n_B}\tau(y').
\]
After replacing both stages by \(n:=\max\{n_A,n_B\}\), using monotonicity of the iteration, we have both
\[
 \tau(x)\equiv_n\tau(x'),
 \qquad
 \tau(y)\equiv_n\tau(y').
\]
Let \(A_n,B_n\in \Theta_n\) be the corresponding stage-\(n\) blocks.
Then
\[
 u,v\in A_n\star_{\tau} B_n.
\]
If \(U,V\in \Theta_n\) are the blocks containing \(u,v\), respectively, then
\[
 U,V\in\operatorname{Out}_{\Theta_n}(A_n,B_n).
\]
Hence \(U\) and \(V\) are merged by
\[
 \Theta_{n+1}=\operatorname{CC}_{\mathrm r}(\Theta_n),
\]
so \(u\equiv_{n+1}v\).
Thus \(A\star_{\tau} B\) is contained in a single block of \(\Cr(\M)\), proving \(\tau\)-residuation coherence.

Let \(\Phi\) be any \(\tau\)-residuation-coherent skeleton partition of \(S\).
By Proposition~\ref{prop:residual-component-coarsening-fixed-points},
\[
 \operatorname{CC}_{\mathrm r}(\Phi)=\Phi.
\]
The singleton partition \(\Theta_0\) refines \(\Phi\), so Lemma~\ref{lem:CCr-monotone} yields inductively that every \(\Theta_n\) refines \(\Phi\).
Hence each \(\equiv_{\omega}^{\mathrm r}\)-class is contained in a \(\Phi\)-block.
Therefore
\[
 \Cr(\M)\le\Phi.
\]
Thus \(\Cr(\M)\) refines every \(\tau\)-residuation-coherent skeleton partition, so it is the finest one.
The final assertion follows from Proposition~\ref{prop:induced-quotient-skeleton} and Proposition~\ref{prop:immediate-consequences-tau-res-coherence}.
\end{proof}

\begin{remark}[Relation with the primitive \(\tau\)-layers]
The primitive \(\tau\)-layers \(\lay{u}\) are not discarded.
They remain the layers of the original local-unit map \(x\mapsto\tau(x)\).
The canonical \(\tau\)-residuation-coherent components are obtained by iterated graph coarsening of the positive-idempotent skeleton.
The terminal components for this residual procedure are exactly the \(\tau\)-residuation-cohesive component algebras.
Equivalently, primitive layers are merged exactly when their positive idempotents become connected through a finite chain of operation-output obstructions.
Thus the coarsening is performed on the positive-idempotent skeleton, while the component partition of \(M\) is induced from it.
The quotient skeleton and its induced components are introduced in the following sections, after operation-output coherence has been obtained.
\end{remark}

\section{Components, quotient skeleton, shadow maps, and order recovery}
\subsection{Component algebras}
\label{sec:components}

Let \(\Omega\) be a \(\tau\)-residuation-coherent skeleton partition of \(\Sk(\M)\).
The canonical choice is \(\Omega=\Cr(\M)\).
For each block \(A\in \Omega\), define
\[
 \layer{A}:=\{x\in M:\tau(x)\in A\}.
\]
The \(\tau\)-residuation coherence of \(\Omega\) implies that \(\layer{A}\) is closed under multiplication and under both residuals.
Thus the \(\Omega\)-components are again balanced residuated partially ordered semigroups with the inherited operations.

\begin{definition}[Component algebra]
\label{def:component-algebra}
For \(A\in \Omega\), the \emph{component algebra at \(A\)} is
\[
 \Comp{A}:=\langle \layer{A},\le_A,\cdot_A,\backslash_A,/_A\rangle,
\]
where \(\le_A\), \(\cdot_A\), \(\backslash_A\), and \(/_A\) are the restrictions of the corresponding relation and operations of \(\M\) to the carrier \(\layer{A}\).
\end{definition}

\begin{proposition}[Component algebras]\label{prop:component-algebras}
Each \(\Comp{A}\) is a balanced residuated partially ordered semigroup.
Moreover,
\[
 \Sk(\Comp{A})=A=\Sk(\M)\cap\layer{A},
\]
and
\[
 A=\{\tau(x):x\in\layer{A}\}.
\]
The intrinsic local-unit map of \(\Comp{A}\) is the restriction of the ambient map \(\tau\).
\end{proposition}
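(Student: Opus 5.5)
We prove the statement in three steps: closure of \(\layer{A}\), residuation with balancedness inside the component, and the identification of the skeleton. The first two are routine consequences of Proposition~\ref{prop:immediate-consequences-tau-res-coherence} and of local units; the third hinges on one remark, isolated below.

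\emph{Closure.} Since \(\Omega\) is \(\tau\)-residuation-coherent, Proposition~\ref{prop:immediate-consequences-tau-res-coherence} gives \(xy,\ x\backslash y,\ x/y\in\layer{A\ast_\Omega A}\) for \(x,y\in\layer{A}\). By Proposition~\ref{prop:induced-quotient-skeleton}, \(A\ast_\Omega A=A\vee A=[u\vee u]_\Omega=[u]_\Omega=A\) for any \(u\in A\), using idempotence of the join on \(S\). Hence \(\layer{A}\) is closed under all three operations, and the restricted operations are well defined. Note also that \(\layer{A}\neq\varnothing\), since \(u=\tau(u)\in\layer{A}\) for every \(u\in A\) by Lemma~\ref{lem:basic-positive-idempotent-facts}.

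\emph{Residuation and balancedness.} The poset, semigroup and residuation axioms are universal statements about elements and operation values. Because the operations restrict to \(\layer{A}\), these axioms restrict verbatim: for \(x,y,z\in\layer{A}\) we have \(xy\le z\iff x\le z/y\iff y\le x\backslash z\), with all residuals computed in \(\layer{A}\). Consequently the internal self-residuals coincide with the ambient ones, \(x\backslash_A x=x\backslash x=x/x=x/_Ax\); thus the local-unit map of \(\Comp{A}\) is the restriction of \(\tau\).

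Positivity of \(\tau(x)\) inside \(\Comp{A}\) is a universal statement over the smaller set \(\layer{A}\) and therefore holds there, provided \(\tau(x)\) is itself an element of \(\layer{A}\). This is the only point where care is needed: \(\tau(\tau(x))=\tau(x)\in A\) by Lemma~\ref{lem:basic-positive-idempotent-facts}, so indeed \(\tau(x)\in\layer{A}\). This establishes balancedness of \(\Comp{A}\).

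\emph{Skeleton identification.} Positivity in \(\Comp{A}\) only quantifies over \(\layer{A}\), so \(\PosId(\Comp{A})\) could a priori exceed \(\PosId(\M)\cap\layer{A}\). The fact needed is that an internally positive idempotent is its own local unit. If \(p\in\layer{A}\) is idempotent and positive in \(\Comp{A}\), apply the same argument as in Lemma~\ref{lem:basic-positive-idempotent-facts}, now inside the balanced algebra \(\Comp{A}\). From \(pp=p\), residuation gives \(p\le p/p\). Conversely, \((p/p)p\le p\), while internal positivity gives \(p/p\le(p/p)p\). Together these give \(p/p=p\), so \(\tau(p)=p\) and therefore \(p\in A\). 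Conversely, each \(u\in A\) lies in \(\layer{A}\), is idempotent, and is positive in \(\M\), hence positive in \(\Comp{A}\). This yields
\[
\Sk(\Comp{A})=A=\Sk(\M)\cap\layer{A},
\]
where the last equality uses \(\tau(u)=u\) for \(u\in\Sk(\M)\) together with the definition of \(\layer{A}\).

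Finally, \(A=\{\tau(x):x\in\layer{A}\}\): the inclusion \(\supseteq\) is the definition of \(\layer{A}\), and \(\subseteq\) holds because each \(u\in A\) satisfies \(u=\tau(u)\) with \(u\in\layer{A}\). The step I expect to need the most care is the skeleton identification, specifically ruling out extra internally positive idempotents, which is handled by the self-residual computation above.
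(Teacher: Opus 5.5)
Your proposal is correct and follows essentially the same route as the paper. You obtain closure of $\layer{A}$ from coherence applied to $(A,A)$ with target block $A$, inherit the residuated and balanced structure by restriction, and identify the skeleton by showing that an internally positive idempotent $p$ satisfies $p/p=p$, so that $\tau(p)=p\in A$. Your extra checks (that $\tau(x)\in\layer{A}$, and the direct computation of $p/p$ instead of invoking Lemma~\ref{lem:basic-positive-idempotent-facts} inside $\Comp{A}$) only spell out details the paper leaves implicit.
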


\begin{proof}
Closure under multiplication and both residuals follows from \(\tau\)-residuation coherence applied to the pair \((A,A)\).
Indeed, the target block for \((A,A)\) is \(A\), since for every \(u\in A\) one has
\[
 u=uu=u\backslash u=u/u,
 \qquad \tau(u)=u.
\]
Thus multiplication and both residuals remain inside \(\layer{A}\).
Associativity, isotonicity, and the residuation laws are inherited from \(\M\), because the order and all three operations are restricted.
Balancedness is inherited in the same way.

It remains to identify the positive-idempotent skeleton of the component.
If \(u\in A\), then \(u\) is a positive idempotent of \(\M\), \(u\in\layer{A}\), and the ambient positivity inequalities restrict to \(\layer{A}\).
Thus \(u\in\Sk(\Comp{A})\).

Conversely, let \(p\in\Sk(\Comp{A})\).
Applying Lemma~\ref{lem:basic-positive-idempotent-facts} inside \(\Comp{A}\) gives \(\tau_{\Comp{A}}(p)=p\).
Since the component residuals are the restrictions of the ambient residuals,
\[
 \tau_{\Comp{A}}(p)=\tau(p).
\]
Hence \(\tau(p)=p\), so \(p\in\Sk(\M)\).
As \(p\in\layer{A}\), this gives \(p=\tau(p)\in A\).
Therefore
\[
 \Sk(\Comp{A})=A=\Sk(\M)\cap\layer{A}.
\]
It follows at once that
\[
 A=\{\tau(x):x\in\layer{A}\},
\]
and that the intrinsic local-unit map of \(\Comp{A}\) is the restriction of the ambient map \(\tau\).
\end{proof}

\begin{remark}
Unlike the primitive \(\tau\)-layer theory, a component \(\layer{A}\) need not be integrally closed and need not have a unique positive idempotent.
This is intentional.
The extra internal positive-idempotent structure records precisely what had to be merged in order to make the decomposition reconstructible without imposing steadiness axioms.
\end{remark}

For the canonical choice \(\Omega=\Cr(\M)\), the family
\[
 \mathcal L_{\mathrm r}(\M):=
 \{\Comp{A}:A\in \Cr(\M)\}
\]
will be called the \emph{canonical \(\tau\)-residuation-coherent decomposition} of \(\M\).

\subsection{The quotient skeleton}
\label{sec:quotient-skeleton}

The quotient skeleton associated with \(\Omega\) is the join-semilattice
\[
 \Sk_{\Omega}(\M):=\langle \Omega,\leOmega\rangle.
\]
Its elements are the positive-idempotent blocks.
The order is the join order:
\[
 A\leOmega B
 \quad\Longleftrightarrow\quad
 A\vee B=B.
\]
For \(x\in M\), write
\[
 \tau_\Omega(x):=[\tau(x)]_{\Omega}.
\]
When \(\Omega=\Cr(\M)\), this is the residual coherentization \(\tau_{\mathrm r}\) of \(\tau\).
Thus \(x\in\layer{A}\) exactly when \(\tau_\Omega(x)=A\).

\begin{proposition}[Target component]\label{prop:target-component}
Let \(x\in\layer{A}\) and \(y\in\layer{B}\).
Put
\[
 C=A\vee B.
\]
Then
\[
 xy,
 \quad
 x\backslash y,
 \quad
 x/y
 \in\layer{C}.
\]
Consequently multiplication and both residuals are component-targeted by the quotient skeleton.
\end{proposition}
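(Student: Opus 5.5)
This is essentially a repackaging of two earlier results, and the plan is to chain them. Since \(\Omega\) is \(\tau\)-residuation-coherent, Proposition~\ref{prop:immediate-consequences-tau-res-coherence} already gives, for \(x\in\layer{A}\) and \(y\in\layer{B}\),
\[
 xy,\quad x\backslash y,\quad x/y\in\layer{A\ast_\Omega B}.
\]
Recall why: the three values \(\tau(xy)\), \(\tau(x\backslash y)\), \(\tau(x/y)\) lie in the saturated output set \(A\star_\tau B\), and coherence says this set meets exactly one \(\Omega\)-block, namely \(A\ast_\Omega B\).

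The second step identifies that target block with the quotient-skeleton join. By Proposition~\ref{prop:induced-quotient-skeleton}, the join \(A\vee B=[u\vee v]_\Omega\) is well defined for any representatives \(u\in A\), \(v\in B\), and it coincides with \(A\ast_\Omega B\). Hence \(C=A\vee B=A\ast_\Omega B\), and substituting into the first step gives the three memberships in \(\layer{C}\). The consequence is a restatement of this: the component containing each operation output is \(\tau_\Omega(x)\vee\tau_\Omega(y)\), computed entirely in \(\Sk_\Omega(\M)\).

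There is no real obstacle here. The only point needing care is that \(\layer{C}\) is defined through \(\tau\), so membership \(z\in\layer{C}\) means \(\tau(z)\in C\). This is exactly the statement that \(\tau(z)\) lies in the unique output block, so no further argument is required.
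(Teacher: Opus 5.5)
Your proposal is correct and takes the same route as the paper. The paper's proof cites \(\tau\)-residuation coherence of \(\Omega\) together with Proposition~\ref{prop:induced-quotient-skeleton}; you do the same, placing the three outputs in \(\layer{A\ast_\Omega B}\) via Proposition~\ref{prop:immediate-consequences-tau-res-coherence} and then identifying \(A\ast_\Omega B\) with \(A\vee B\).
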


\begin{proof}
This follows from \(\tau\)-residuation coherence of \(\Omega\) and Proposition~\ref{prop:induced-quotient-skeleton}.
\end{proof}

\subsection{Product and residual shadow maps}
\label{sec:shadow-maps}

The quotient skeleton gives the target component.
For comparable source and target components, the shadow maps give the component-to-component transports.

\begin{definition}[Product and residual shadow maps]
Let \(A,C\in\Sk_{\Omega}(\M)\) with \(A\leOmega C\), and let \(q\in C\).
For \(x\in\layer{A}\), define
\[
 \lambda^{A,C}_q(x):=xq,
 \qquad
 \eta^{A,C}_q(x):=x/q.
\]
When the source and target are clear, write simply
\[
 \lambda_q(x)=xq,
 \qquad
 \eta_q(x)=x/q.
\]
For each fixed \(q\in C\), the map \(\lambda_q\) is a product-shadow map from \(\layer{A}\) to \(\layer{C}\), and \(\eta_q\) is a residual-shadow map from \(\layer{A}\) to \(\layer{C}\).
The corresponding families are the product-shadow family and the residual-shadow family into \(C\).
\end{definition}

\begin{lemma}[Shadow maps land in the target component]\label{lem:shadow-maps-land-in-target-component}
If \(x\in\layer{A}\), \(A\leOmega C\), and \(q\in C\), then
\[
 xq\in\layer{C},
 \qquad
 x/q\in\layer{C}.
\]
\end{lemma}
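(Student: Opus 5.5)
The plan is to read both memberships off Proposition~\ref{prop:target-component}, applied to the pair consisting of \(x\) and the positive idempotent \(q\). First I would record that \(q\) lies in the component \(\layer{C}\). Since \(q\in C\subseteq\Sk(\M)\), Lemma~\ref{lem:basic-positive-idempotent-facts} gives \(\tau(q)=q\in C\), so \(q\in\layer{C}\). Equivalently, \(\tau_\Omega(q)=C\).

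Next, I would apply Proposition~\ref{prop:target-component} with \(x\in\layer{A}\) and \(y:=q\in\layer{C}\). It gives
\[
 xq,\qquad x\backslash q,\qquad x/q\ \in\ \layer{A\vee C}.
\]
Note that the proposition is stated for an arbitrary pair of elements from two blocks. It therefore covers the residual \(x/q\) with \(x\) in the first slot, which is exactly the residual-shadow map.

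Finally, I would use the hypothesis \(A\leOmega C\). By the definition of the order on the quotient skeleton, this means \(A\vee C=C\), where the join is the well-defined join of Proposition~\ref{prop:induced-quotient-skeleton}. Hence \(\layer{A\vee C}=\layer{C}\), and both \(xq\) and \(x/q\) lie in \(\layer{C}\), as claimed.

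I expect no real obstacle here; the only point needing care is the bookkeeping that \(q\) itself sits in \(\layer{C}\), which is what licenses invoking the target-component proposition. A more hands-on alternative would be to pick \(u=\tau(x)\in A\) and observe that \(u\vee q\in A\vee C=C\) by Proposition~\ref{prop:immediate-consequences-tau-res-coherence}. This alternative is not needed, since coherence already controls \(\tau(xq)\) and \(\tau(x/q)\) directly through the output set \(A\star_\tau C\).
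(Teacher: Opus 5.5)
Your proof is correct and follows essentially the same route as the paper. The paper also notes \(\tau(q)=q\in C\) and then applies \(\tau\)-residuation coherence to the pair \((x,q)\), whose target block is \(A\vee C=C\), to place both \(xq\) and \(x/q\) in \(\layer{C}\); your appeal to Proposition~\ref{prop:target-component} packages exactly that step.
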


\begin{proof}
Since \(q\in C\), \(\tau(q)=q\in C\).
The product-shadow map lands in \(\layer{C}\) by applying \(\tau\)-residuation coherence to \((x,q)\), whose target block is \(A\vee C=C\).
The residual-shadow map lands in \(\layer{C}\) by applying the right-residual trigger to the same pair.
Thus \(xq,x/q\in\layer{C}\).
\end{proof}

\begin{remark}[Why paired transport certifies order]
The transported element \(xq\) is an upper approximation of \(x\), because \(x\le xq\).
The transported element \(x/q\) is a lower approximation of \(x\), because \(x/q\le x\).
Thus a comparison
\[
 xq\le y/q
\]
inside a target component is a genuine certificate of the ambient comparison \(x\le y\).
This is the basic reason why the paired-transport comparison gives a direct order certificate rather than relying on a one-sided upper approximation.
\end{remark}

The preceding three constructions form a single package.
The quotient skeleton records the target block \(A\vee B\), the component algebras supply the internal operations and order inside each block, and the relevant shadow maps transport elements into the common target component.
The reconstruction theorem below shows that this package contains exactly the information needed to recover the ambient ordered residuated semigroup.

\subsection{Resolved paired-transport order recovery}
\label{sec:paired-transport-order-recovery}

Let \(\Omega\) be a \(\tau\)-residuation-coherent skeleton partition.
Then multiplication and both residuals land in target components determined by the quotient skeleton of \(\Omega\).
For the canonical reconstruction we later take \(\Omega=\Cr(\M)\), but the order-recovery argument itself works for every such \(\Omega\).
The residuals need not be recovered by an independent maximization formula based on transported elements: after the product and the ambient order are recovered, the residuals are forced by residuation.
The remaining point is therefore the ambient order, and it is recovered by the resolved paired-transport test below.

Let \(x\in\layer{A}\) and \(y\in\layer{B}\).
Put \(C=A\vee B\) in the quotient skeleton induced by \(\Omega\).
Define
\[
 x\preceq_C y
 \quad\Longleftrightarrow\quad
 \exists q\in C\quad xq\le_C y/q.
\]

\begin{lemma}[Soundness of resolved order witnesses]\label{lem:soundness-resolved-order-witnesses}
Let \(x\in\layer{A}\), \(y\in\layer{B}\), and \(C=A\vee B\).
If \(x\preceq_C y\), then \(x\le y\) in \(\M\).
\end{lemma}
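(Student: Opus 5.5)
The plan is to chain the two one-sided approximations recorded in Lemma~\ref{lem:basic-positive-idempotent-facts} through the witness inequality. Suppose \(x\preceq_C y\), and fix a witness \(q\in C\) with \(xq\le_C y/q\). Since \(\le_C\) is by Definition~\ref{def:component-algebra} just the restriction of the ambient order to \(\layer{C}\), this gives \(xq\le y/q\) in \(\M\). The component membership of \(xq\) and \(y/q\) (Lemma~\ref{lem:shadow-maps-land-in-target-component}) is only needed so that the expression \(xq\le_C y/q\) makes sense; it plays no role in the inequality argument itself.

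Since \(q\in C\subseteq\Sk(\M)\) is a positive idempotent, Lemma~\ref{lem:basic-positive-idempotent-facts} gives \(x\le xq\) by positivity, and \(y/q\le y\). Combining these in the ambient order yields the chain
\[
 x\le xq\le y/q\le y ,
\]
and transitivity gives \(x\le y\).

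As a cross-check I would also note the residuation route. From \(xq\le y/q\), residuation gives \(xq\cdot q\le y\), and idempotence turns this into \(xq\le y\). Positivity then gives \(x\le xq\le y\).

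There is no real obstacle here. The only point to watch is that the bound \(y/q\le y\) relies on positivity of \(q\), which Lemma~\ref{lem:basic-positive-idempotent-facts} already records, and that \(\le_C\) is indeed the restricted ambient order. Neither the hypothesis \(C=A\vee B\) nor \(\tau\)-residuation coherence is used for soundness. These hypotheses matter only for the converse, completeness direction.
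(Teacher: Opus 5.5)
Your argument is correct and matches the paper's proof. Both take a witness \(q\in C\) and form the chain \(x\le xq\le y/q\le y\), using positivity of \(q\) for the first step and \(y/q\le y\) for the last (the paper rederives this from \((y/q)q\le y\) and positivity, while you cite it from Lemma~\ref{lem:basic-positive-idempotent-facts}); your residuation cross-check via \((xq)q=xq\le y\) is also valid but not needed.
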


\begin{proof}
Choose \(q\in C\) such that \(xq\le_C y/q\).
Since \(q\) is positive, \(x\le xq\).
By residuation, \((y/q)q\le y\); since \(q\) is positive, \(y/q\le (y/q)q\), and hence \(y/q\le y\).
Thus
\[
 x\le xq\le y/q\le y.
\]
\end{proof}

\begin{theorem}[Resolved order-completeness]\label{thm:resolved-order-completeness}
Every \(\tau\)-residuation-coherent skeleton partition satisfies the resolved order formula.
More explicitly, for all \(x\in\layer{A}\) and \(y\in\layer{B}\), with \(C=A\vee B\),
\[
 x\le y
 \quad\Longleftrightarrow\quad
 \exists q\in C\quad xq\le_C y/q.
\]
Indeed, whenever \(x\le y\), the single witness
\[
 q=\tau(x)\tau(y)=\tau(x)\vee\tau(y)
\]
works.
Equivalently, since this witness is central, the same target comparison may be written as
\[
 qx=xq\le_C q\backslash y.
\]
\end{theorem}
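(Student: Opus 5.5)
The right-to-left implication is Lemma~\ref{lem:soundness-resolved-order-witnesses}, so only completeness needs an argument, and we prove it with the displayed witness. Assume \(x\le y\) with \(x\in\layer{A}\), \(y\in\layer{B}\), \(C=A\vee B\), and put
\[
 q:=\tau(x)\tau(y)=\tau(x)\vee\tau(y)
\]
(Lemma~\ref{lem:basic-positive-idempotent-facts}). By Proposition~\ref{prop:immediate-consequences-tau-res-coherence} and Proposition~\ref{prop:induced-quotient-skeleton}, \(q\in A\vee B=C\). This membership is essential: the witness must come from the target block, which is why we use the join of the actual local units rather than an arbitrary element of \(C\).

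Next we check that both transported elements lie in \(\layer{C}\). Since \(A\leOmega C\) and \(B\leOmega C\), Lemma~\ref{lem:shadow-maps-land-in-target-component} gives \(xq\in\layer{C}\) and \(y/q\in\layer{C}\). Hence the comparison \(\le_C\), which is the restriction of \(\le\) to \(\layer{C}\), makes sense. It therefore suffices to prove \(xq\le y/q\) in \(\M\).

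For the key inequality, residuation says \(xq\le y/q\) iff \((xq)q\le y\), and \((xq)q=xq\) because \(q\) is idempotent. So we need \(xq\le y\). Write \(u=\tau(x)\) and \(v=\tau(y)\). By centrality of positive idempotents, \(xq=x\,uv=(xu)v=xv\), using \(xu=x\) (Lemma~\ref{lem:basic-positive-idempotent-facts}). Isotonicity of multiplication and \(x\le y\) then give \(xv\le yv=y\), where \(yv=y\) holds because \(v=\tau(y)\). Thus \(xq\le y\), so \(xq\le y/q\), and hence \(xq\le_C y/q\).

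Finally, the reformulation \(qx=xq\le_C q\backslash y\) follows from centrality of \(q\) and the identity \(y/q=q\backslash y\) for central \(q\) (Lemma~\ref{lem:basic-positive-idempotent-facts}). The only genuine obstacle is the placement step, namely ensuring \(q\in C\) and that both shadows land in \(\layer{C}\). That step is handled entirely by \(\tau\)-residuation coherence through Lemma~\ref{lem:shadow-maps-land-in-target-component} and the description of the quotient join. The order computation itself is a short residuation argument.
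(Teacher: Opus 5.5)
Your proof is correct and follows the paper's argument essentially step for step: you use the same witness \(w=\tau(x)\tau(y)\in C\) obtained via Proposition~\ref{prop:induced-quotient-skeleton}, place \(xw\) and \(y/w\) in the target component by Lemma~\ref{lem:shadow-maps-land-in-target-component}, and use residuation and idempotence to reduce \(xw\le y/w\) to \(xw\le y\), which then follows from \(x\le y\) and the local-unit identities. The only cosmetic difference is that the paper computes on the left, \(wx=vux=vx\le vy=y\), which uses centrality, whereas you compute on the right, \(x(uv)=(xu)v=xv\le yv=y\); there associativity alone suffices, so your appeal to centrality at that step is unnecessary.
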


\begin{proof}
The right-to-left implication is Lemma~\ref{lem:soundness-resolved-order-witnesses}.
For the converse, assume \(x\le y\), and put
\[
 u=\tau(x),
 \qquad
 v=\tau(y),
 \qquad
 w=uv=u\vee v.
\]
Then \(u\in A\), \(v\in B\), and Proposition~\ref{prop:induced-quotient-skeleton} gives \(w\in C=A\vee B\).
The element \(w\) is a central positive idempotent.
Also \(u\) and \(v\) are two-sided local units for \(x\) and \(y\), respectively, so
\[
 ux=xu=x,
 \qquad
 vy=yv=y.
\]
We first record the elementary comparison equivalence induced by \(w\).
If \(x\le y\), then
\[
 wx=uvx=vux=vx\le vy=y.
\]
Conversely, if \(wx\le y\), then positivity of \(w\) gives \(x\le wx\le y\).
Hence
\[
 x\le y
 \quad\Longleftrightarrow\quad
 wx\le y.
\]
Since \(w^2=w\) and \(w\) is central,
\[
 wx=wwx=wxw.
\]
Therefore
\[
 x\le y
 \quad\Longleftrightarrow\quad
 wx\le y
 \quad\Longleftrightarrow\quad
 wwx\le y
 \quad\Longleftrightarrow\quad
 wxw\le y.
\]
By residuation, the last inequality is equivalent to
\[
 wx\le y/w.
\]
Since \(w\) is central, \(y/w=w\backslash y\), and since \(wx=xw\), the same condition may also be written as
\[
 xw=wx\le w\backslash y.
\]
Lemma~\ref{lem:shadow-maps-land-in-target-component} gives \(xw,y/w\in\layer{C}\), so this is a comparison inside the target component:
\[
 xw\le_C y/w.
\]
Thus \(x\preceq_C y\), with the single witness \(q=w\).
\end{proof}

\begin{corollary}[Order recovery for the residual coherentization]
For \(\Omega=\Cr(\M)\), and for \(x\in\layer{A}\), \(y\in\layer{B}\), \(C=A\vee B\),
\[
 x\le y
 \quad\Longleftrightarrow\quad
 \exists q\in C\quad xq\le_C y/q.
\]
\end{corollary}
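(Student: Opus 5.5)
This corollary is the instance \(\Omega=\Cr(\M)\) of Theorem~\ref{thm:resolved-order-completeness}, so the plan is simply to check that \(\Cr(\M)\) satisfies the theorem's hypothesis and then specialize. The one input needed is Theorem~\ref{prop:canonicality-Cr}: \(\Cr(\M)\) is a \(\tau\)-residuation-coherent skeleton partition of \(\Sk(\M)\).

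With this in hand, the quotient skeleton \(\Sk_{\Cr(\M)}(\M)\) is a well-defined join-semilattice by Proposition~\ref{prop:induced-quotient-skeleton}, and the target block \(C=A\vee B\) has meaning. By Proposition~\ref{prop:component-algebras}, the component algebras \(\Comp{A}\), \(\Comp{B}\), \(\Comp{C}\) are defined, so the relation \(\le_C\) is available.

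Both directions of the equivalence then follow from the general argument already given. The right-to-left implication is Lemma~\ref{lem:soundness-resolved-order-witnesses}: from \(xq\le y/q\), positivity of \(q\) gives \(x\le xq\), and residuation gives \(y/q\le y\).

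For the left-to-right implication, suppose \(x\le y\). Take the witness
\[
 q=\tau(x)\vee\tau(y)=\tau(x)\tau(y).
\]
This \(q\) lies in \(C\) by Proposition~\ref{prop:immediate-consequences-tau-res-coherence}, and \(xq,\;y/q\in\layer{C}\) by Lemma~\ref{lem:shadow-maps-land-in-target-component}. The comparison \(xq\le y/q\) is shown in the proof of Theorem~\ref{thm:resolved-order-completeness}, using centrality and idempotence of \(q\) together with residuation.

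No real obstacle is expected. The only point to confirm is that the coherence used in those lemmas is exactly the coherence that Theorem~\ref{prop:canonicality-Cr} establishes for \(\Cr(\M)\), and it is.
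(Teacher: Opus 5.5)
Your proposal is correct and matches the paper: the corollary is just Theorem~\ref{thm:resolved-order-completeness} specialized to $\Omega=\Cr(\M)$. Its hypothesis is supplied by the $\tau$-residuation coherence of $\Cr(\M)$ established in Theorem~\ref{prop:canonicality-Cr}, and you use the same witness $q=\tau(x)\tau(y)$ and the same soundness argument via positivity and residuation.
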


\begin{remark}[Why the paired-transport test is sound]
A one-sided upper approximation of the form
\[
 y<\lambda_q(x)
\]
inside a target component is not, by itself, an ambient-order certificate.
The resolved test instead uses a target idempotent \(q\) and the comparison
\[
 \lambda_q(x)\le \eta_q(y),
\]
equivalently \(xq\le y/q\), inside the common target component.
This comparison is automatically sound: since \(q\) is positive, \(x\le xq\), and by residuation \(y/q\le y\).
Thus \(xq\le y/q\) implies \(x\le y\).
The witness \(q=\tau(x)\tau(y)\) supplied in Theorem~\ref{thm:resolved-order-completeness} then gives completeness of the paired-transport test.
\end{remark}

\begin{proposition}[Residuals are determined by order and product]\label{prop:residuals-determined-by-order-product}
Let \(\M=\langle M,\le,\cdot,\backslash,/\rangle\) be a residuated partially ordered semigroup.
Then the residuals are uniquely determined by \(M\), \(\le\), and \(\cdot\): for all \(x,y\in M\),
\[
 x\backslash y=\max\{z\in M:xz\le y\},
 \qquad
 x/y=\max\{z\in M:zy\le x\}.
\]
\end{proposition}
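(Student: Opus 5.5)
The plan is to read both formulas directly off the residuation equivalence
\[
 xy\le z\iff x\le z/y\iff y\le x\backslash z,
\]
which uses only the order, the multiplication, and the two residuals; balancedness is not needed. Fix \(x,y\in M\).

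For the left residual, first show that \(x\backslash y\) belongs to the set \(\{z\in M:xz\le y\}\). Apply the residuation equivalence with \(z:=x\backslash y\) to the trivial inequality \(x\backslash y\le x\backslash y\); this gives \(x(x\backslash y)\le y\). Next show that \(x\backslash y\) is an upper bound of that set: if \(xz\le y\), the same equivalence gives \(z\le x\backslash y\). A set that contains one of its upper bounds has that element as its maximum, so \(x\backslash y=\max\{z:xz\le y\}\). In particular the maximum exists, and it is unique because \(\le\) is antisymmetric.

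The right residual is handled symmetrically. The convention \(zy\le x\iff z\le x/y\) shows that \(x/y\) lies in \(\{z:zy\le x\}\), via \((x/y)y\le x\), and that it bounds every element of that set. Hence it is the maximum.

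For uniqueness, suppose \(\backslash',/'\) are any other residuals for the same \(\langle M,\le,\cdot\rangle\). The argument above shows that they equal the same maxima, which are defined from \(\le\) and \(\cdot\) alone. So \(\backslash'=\backslash\) and \(/'=/\).

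There is no real obstacle. The only point needing care is the argument order in the paper's convention: \(x/y\) is the largest \(z\) with \(zy\le x\), not with \(yz\le x\). This must be matched correctly when the equivalence is instantiated.
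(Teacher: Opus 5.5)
Your proof is correct and follows the same route as the paper. The paper reads both formulas directly off the equivalences \(z\le x\backslash y\iff xz\le y\) and \(z\le x/y\iff zy\le x\), and concludes that two residuated structures on the same ordered semigroup have the same residuals; your separate membership and upper-bound steps just spell that out (one harmless slip: your ``\(z:=x\backslash y\)'' reuses the letter \(z\) from your displayed residuation law in a different argument position).
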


\begin{proof}
By residuation,
\[
 z\le x\backslash y
 \quad\Longleftrightarrow\quad
 xz\le y.
\]
Hence \(x\backslash y\) is exactly the greatest element of \(\{z:xz\le y\}\).
Similarly,
\[
 z\le x/y
 \quad\Longleftrightarrow\quad
 zy\le x,
\]
so \(x/y\) is exactly the greatest element of \(\{z:zy\le x\}\).
Thus two residuated structures on the same ordered semigroup have the same residuals.
\end{proof}

\section{Algebra reconstruction from the transport data}
\label{sec:algebra-reconstruction}

Fix
\[
 \Omega:=\Cr(\M).
\]
The reconstruction data are
\[
\begin{aligned}
\mathfrak D_{\mathrm r}(\M):=
\bigl(&\Sk_{\mathrm r}(\M),
 \{\Comp{A}:A\in\Cr(\M)\},\\
&\{\lambda^{A,C}_q:A\leOmega C,\ q\in C\},
 \{\eta^{A,C}_q:A\leOmega C,\ q\in C\}\bigr).
\end{aligned}
\]
where
\[
 \Sk_{\mathrm r}(\M)=\Sk_{\Cr(\M)}(\M),
\]
and \(\leOmega\) denotes the order of this residual quotient skeleton, and, for \(x\in \layer{A}\),
\[
 \lambda^{A,C}_q(x)=xq,
 \qquad
 \eta^{A,C}_q(x)=x/q .
\]
When the source and target components are clear, we write simply \(\lambda_q(x)\) and \(\eta_q(x)\).

\subsection{Product reconstruction}

Let \(x\in\layer{A}\) and \(y\in\layer{B}\), and put \(C=A\vee B\).
The resolved product candidates are
\[
 \Pi_C(x,y):=
 \{(xr)(yr):r\in C\}\subseteq\layer{C}.
\]

\begin{proposition}[Product recovery]\label{prop:product-recovery}
The product \(xy\) is the least element of \(\Pi_C(x,y)\):
\[
 xy=\min \Pi_C(x,y).
\]
\end{proposition}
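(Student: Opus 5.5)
The plan has three parts: show that every candidate lies in \(\layer{C}\), exhibit one candidate equal to \(xy\), and show that \(xy\) lies below every candidate. Since \(\le_C\) is the restriction of \(\le\), these together give \(xy=\min\Pi_C(x,y)\) in \(\Comp{C}\).

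\emph{Membership.} For \(r\in C\), Lemma~\ref{lem:shadow-maps-land-in-target-component} gives \(xr,yr\in\layer{C}\), because \(A\leOmega C\) and \(B\leOmega C\) both hold when \(C=A\vee B\). Proposition~\ref{prop:component-algebras} says \(\layer{C}\) is closed under multiplication, so \((xr)(yr)\in\layer{C}\). Proposition~\ref{prop:target-component} gives \(xy\in\layer{C}\) as well.

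\emph{Lower bound.} Fix \(r\in C\). Because \(r\) is a central idempotent (Lemma~\ref{lem:basic-positive-idempotent-facts}),
\[
(xr)(yr)=xyrr=(xy)r .
\]
Positivity of \(r\) then gives \(xy\le (xy)r=(xr)(yr)\). So \(xy\) is a lower bound of \(\Pi_C(x,y)\).

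\emph{Attainment.} Take \(w=\tau(x)\tau(y)=\tau(x)\vee\tau(y)\). Proposition~\ref{prop:induced-quotient-skeleton} gives \(w\in C\), since \(\tau(x)\in A\) and \(\tau(y)\in B\). By Lemma~\ref{lem:basic-positive-idempotent-facts}, \(x\tau(x)=x\) and \(y\tau(y)=y\). Writing \(u=\tau(x)\) and \(v=\tau(y)\) and using centrality,
\[
xw=xuv=xv,\qquad yw=yuv=yv u=yu .
\]
Then
\[
(xw)(yw)=xv\,yu=xy\,uv .
\]
Centrality of \(v\) gives \(xvy=xyv\), and then \(yv=y\). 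Hence \(xy\,uv=x(yv)u=xyu\). Centrality of \(u\) gives \(xyu=uxy\), and \(ux=x\). Therefore
\[
(xw)(yw)=xy,
\]
so \(xy\in\Pi_C(x,y)\).

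The main obstacle is this last bookkeeping step: each local unit must be absorbed only by the factor it is a unit for. Centrality of positive idempotents is what allows \(u\) and \(v\) to be moved next to \(x\) and \(y\) respectively, so the computation is straightforward once that lemma is invoked. The membership part needs only the closure of \(\layer{C}\) under multiplication.
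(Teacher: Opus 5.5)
Your proof is correct and follows the paper's argument. The lower bound via centrality and positivity, \((xr)(yr)=xyr\ge xy\), is identical to the paper's, and your membership check spells out what the paper only records in the definition \(\Pi_C(x,y)\subseteq\layer{C}\).

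The one difference is the attaining witness. The paper takes \(r=\tau(xy)\), which lies in \(C\) by \(\tau\)-residuation coherence and gives \((xr)(yr)=xy\,\tau(xy)=xy\) at once. Your \(w=\tau(x)\tau(y)\), the witness from the order-completeness theorem, lies in \(C\) by the quotient join. It needs the short centrality computation \(xy\,\tau(x)\tau(y)=xy\), which you carry out correctly.
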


\begin{proof}
For every \(r\in C\), positivity gives \(xy\le xyr\).
Since positive idempotents are central,
\[
 (xr)(yr)=xyr^2=xyr.
\]
Thus \(xy\le (xr)(yr)\) for every \(r\in C\).
On the other hand, \(\tau(xy)\in C\) by \(\tau\)-residuation coherence, and
\[
 (x\tau(xy))(y\tau(xy))=xy\tau(xy)=xy.
\]
Hence \(xy\) belongs to \(\Pi_C(x,y)\), and it is its least element.
\end{proof}

\subsection{Order reconstruction}

\begin{theorem}[Order recovery from paired transport]\label{thm:order-recovery-from-paired-transport}
For \(x\in\layer{A}\), \(y\in\layer{B}\), and \(C=A\vee B\),
\[
 x\le y
 \quad\Longleftrightarrow\quad
 \exists q\in C\quad xq\le_C y/q.
\]
\end{theorem}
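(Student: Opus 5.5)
This theorem is the instance \(\Omega=\Cr(\M)\) of Theorem~\ref{thm:resolved-order-completeness}, so the plan is to check that the hypotheses of that result hold here and then invoke it. Concretely, Theorem~\ref{prop:canonicality-Cr} shows that \(\Cr(\M)\) is \(\tau\)-residuation-coherent. Hence the quotient skeleton \(\Sk_{\mathrm r}(\M)\) is a join-semilattice by Proposition~\ref{prop:induced-quotient-skeleton}, the block \(C=A\vee B\) is well defined, and the relation \(\le_C\) on \(\layer{C}\) is meaningful.

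For the right-to-left direction, I would invoke Lemma~\ref{lem:soundness-resolved-order-witnesses} directly, or repeat its one-line chain. Given a witness \(q\in C\) with \(xq\le_C y/q\), positivity of \(q\) gives \(x\le xq\), and Lemma~\ref{lem:basic-positive-idempotent-facts} gives \(y/q\le y\). Therefore
\[
 x\le xq\le y/q\le y .
\]
This direction does not use coherence at all.

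For the left-to-right direction, assume \(x\le y\) and set \(w=\tau(x)\tau(y)=\tau(x)\vee\tau(y)\). Proposition~\ref{prop:immediate-consequences-tau-res-coherence} gives \(w\in A\ast_\Omega B=C\). Since \(\tau(x)\) and \(\tau(y)\) are two-sided local units of \(x\) and \(y\), centrality (Lemma~\ref{lem:basic-positive-idempotent-facts}) yields
\[
 wxw=wx=\tau(y)x\le \tau(y)y=y .
\]
Residuation then turns \((xw)w\le y\) into \(xw\le y/w\). Lemma~\ref{lem:shadow-maps-land-in-target-component}, applied with \(A\le_\Omega C\), \(B\le_\Omega C\) and \(q=w\in C\), places both \(xw\) and \(y/w\) in \(\layer{C}\). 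So the inequality is a comparison \(xw\le_C y/w\) inside the target component, and \(q=w\) is the required witness.

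The only delicate point is this last step: the witness \(w\) must actually lie in the block \(C\), and both shadows must land in \(\layer{C}\). This is exactly where \(\tau\)-residuation coherence of \(\Cr(\M)\) is needed, both for the product shadow \(xw\) and for the right-residual shadow \(y/w\). Once Theorem~\ref{prop:canonicality-Cr} supplies coherence, the remaining steps are routine, and I would present the proof as a short citation of Theorem~\ref{thm:resolved-order-completeness} with the hypotheses checked as above.
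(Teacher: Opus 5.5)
Your proposal is correct and takes essentially the same approach as the paper. The paper proves this theorem as the instance \(\Omega=\Cr(\M)\) of Theorem~\ref{thm:resolved-order-completeness}, whose proof uses your witness \(w=\tau(x)\tau(y)\in C\), the same centrality-plus-residuation step giving \(xw\le y/w\), and Lemma~\ref{lem:shadow-maps-land-in-target-component} to place both shadows in \(\layer{C}\); your computation \(wxw=wx=\tau(y)x\le y\) is a compressed form of the paper's chain of equivalences.
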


\begin{proof}
This is Theorem~\ref{thm:resolved-order-completeness} applied to \(\Omega=\Cr(\M)\).
\end{proof}

\subsection{Residual reconstruction}

\begin{proposition}[Residual recovery from recovered order and product]\label{prop:residual-recovery-from-recovered-order-product}
Assume that the product and order of \(\M\) have been recovered from \(\mathfrak D_{\mathrm r}(\M)\).
Then the two residuals are recovered uniquely by
\[
 x\backslash y=\max\{z\in M:xz\le y\},
 \qquad
 x/y=\max\{z\in M:zy\le x\}.
\]
\end{proposition}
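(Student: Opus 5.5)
The plan is to reduce the statement to Proposition~\ref{prop:residuals-determined-by-order-product}, applied to the original structure \(\M\). First fix the underlying set. The carrier \(M\) is available from the data \(\mathfrak D_{\mathrm r}(\M)\) as a disjoint union. By Proposition~\ref{prop:component-algebras}, every \(x\in M\) lies in exactly one component \(\layer{A}\), namely the one with \(A=[\tau(x)]_{\Cr(\M)}\), because the blocks of \(\Cr(\M)\) partition \(S\). Hence \(M=\bigcup_{A\in\Cr(\M)}\layer{A}\) is recovered as a set together with its component labelling.

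By hypothesis the product has been recovered; concretely this is Proposition~\ref{prop:product-recovery}, which gives \(xy=\min\Pi_C(x,y)\) with \(C=A\vee B\). The order has also been recovered; concretely this is Theorem~\ref{thm:order-recovery-from-paired-transport}. So the reconstructed binary operation and relation on \(M\) are literally the ambient \(\cdot\) and \(\le\) of \(\M\). The ambient residuals satisfy the residuation equivalences with respect to these very \(\le\) and \(\cdot\). Proposition~\ref{prop:residuals-determined-by-order-product} then states that \(x\backslash y\) is the greatest element of \(\{z\in M:xz\le y\}\), and \(x/y\) the greatest element of \(\{z\in M:zy\le x\}\). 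In particular both maxima exist. The displayed formulas therefore return the residuals of \(\M\).

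Uniqueness follows the same way. Suppose another pair of operations \(\backslash',/'\) on \(M\) makes \(\langle M,\le,\cdot\rangle\) residuated. Then Proposition~\ref{prop:residuals-determined-by-order-product} identifies both pairs with the same maxima, so they coincide.

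I would optionally add a localization remark. By Proposition~\ref{prop:target-component}, \(x\backslash y\) and \(x/y\) lie in \(\layer{C}\) with \(C=A\vee B\). Consequently the maximum may equivalently be taken over \(z\in\layer{C}\), which keeps the computation inside one component.

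The only real obstacle is conceptual rather than technical. One must make sure that the quantities "recovered" from the data really are the ambient product and order, and not merely isomorphic copies. I would handle this by stressing that the recovery formulas are stated as equalities in \(M\), so no transport of structure is needed. With that settled, the proof is a direct citation.
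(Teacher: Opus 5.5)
Your proposal is correct and matches the paper's proof: both apply Proposition~\ref{prop:residuals-determined-by-order-product} to the recovered ordered semigroup, which is literally \(\langle M,\le,\cdot\rangle\), and both use the residuation of the original structure to guarantee that the maxima exist and equal the original residuals. Your optional localization remark is also valid: by Proposition~\ref{prop:target-component}, \(x\backslash y\) and \(x/y\) lie in \(\layer{C}\) with \(C=A\vee B\), and they are the greatest elements of the correspondingly restricted sets, so the maxima may equally be taken inside \(\layer{C}\).
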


\begin{proof}
This is Proposition~\ref{prop:residuals-determined-by-order-product} applied to the recovered ordered semigroup.
Since the original structure is residuated, the indicated greatest elements exist and are the original residual values.
\end{proof}

\subsection{Full reconstruction}

\begin{theorem}[Resolved \(\tau\)-residuation-coherent decomposition--reconstruction]\label{thm:resolved-tau-residuation-reconstruction}
Let \(\M\) be a balanced residuated partially ordered semigroup, and let \(\Omega=\Cr(\M)\).
Then \(\M\) is recovered, up to the evident isomorphism, from the following data:
\begin{enumerate}[label=\textup{(\arabic*)},leftmargin=2.2em]
\item the quotient skeleton \(\Sk_{\mathrm r}(\M)\);
\item the component algebras \(\Comp{A}\), for \(A\in\Cr(\M)\);
\item the product-shadow maps \(\lambda_q(x)=xq\), where \(q\) ranges over the target block;
\item the residual-shadow maps \(\eta_q(x)=x/q\), where \(q\) ranges over the target block.
\end{enumerate}
The product is reconstructed as the least resolved product value.
The order is reconstructed by the resolved paired-transport formula.
The residuals are then reconstructed, on their own, as the residuals determined by this recovered ordered semigroup, as in Proposition~\ref{prop:residuals-determined-by-order-product}.
\end{theorem}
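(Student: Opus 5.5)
The theorem assembles results already proved, so the plan is to specify exactly what ``recovered'' means and then cite the relevant propositions in order. First, the carrier and the placement of elements. Since \(\Cr(\M)\) is a partition of \(\Sk(\M)\) and every \(x\in M\) has \(\tau(x)\in\Sk(\M)\), the sets \(\layer{A}\), \(A\in\Cr(\M)\), partition \(M\). The carrier is therefore recovered as the disjoint union of the carriers of the component algebras \(\Comp{A}\), each element tagged by its block. By Proposition~\ref{prop:component-algebras}, each block \(A\) is itself recovered from \(\Comp{A}\) as \(\Sk(\Comp{A})\). So the indices \(q\in C\) used by the shadow maps are available inside the data, and the join \(A\vee B\) is read off from the quotient skeleton \(\Sk_{\mathrm r}(\M)\) (Proposition~\ref{prop:induced-quotient-skeleton}).

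Second, multiplication. Take \(x\in\layer{A}\) and \(y\in\layer{B}\), and put \(C=A\vee B\). For \(r\in C\), we have \(A\leOmega C\) and \(B\leOmega C\), so \(xr=\lambda^{A,C}_r(x)\) and \(yr=\lambda^{B,C}_r(y)\) are given by the data, and both lie in \(\layer{C}\) by Lemma~\ref{lem:shadow-maps-land-in-target-component}. Their product is computed by \(\cdot_C\) in \(\Comp{C}\), and the minimum is taken with respect to \(\le_C\). Proposition~\ref{prop:product-recovery} then gives \(xy=\min\Pi_C(x,y)\), so the product is determined by the data.

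Third, order. By Theorem~\ref{thm:order-recovery-from-paired-transport}, \(x\le y\) holds iff some \(q\in C\) satisfies \(\lambda_q(x)\le_C\eta_q(y)\). Here \(\eta^{B,C}_q\) is a datum, and \(\le_C\) is the order of \(\Comp{C}\), so the order is determined as well. Fourth, the residuals follow from Proposition~\ref{prop:residual-recovery-from-recovered-order-product}.

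To make ``up to the evident isomorphism'' precise, I would state the conclusion as follows. Suppose \(\M'\) is another balanced residuated partially ordered semigroup, and there is a bijection between \(\Cr(\M)\) and \(\Cr(\M')\) together with compatible isomorphisms \(\Comp{A}\cong\Comp{A'}\) that respect the quotient-skeleton order and intertwine all the shadow maps. Then the glued map \(M\to M'\) is an isomorphism. The proof is the three recovery formulas above: the glued map preserves \(\min\Pi_C\) and the paired-transport test, hence preserves product and order. It then preserves the residuals by the uniqueness in Proposition~\ref{prop:residuals-determined-by-order-product}.

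The main obstacle is bookkeeping, not mathematics. The shadow maps must be regarded as data indexed by \(q\) itself, so that the intertwining condition makes sense; the component isomorphisms must map the blocks \(A=\Sk(\Comp{A})\) onto each other. This holds because isomorphisms of balanced residuated po-semigroups preserve positive idempotents. It must also be checked that no choice of witness \(q\) is needed, since the formulas quantify over all \(q\in C\).
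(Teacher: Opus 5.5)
Your proposal is correct and follows the same route as the paper's proof. The carrier is the disjoint union of the components, the target block \(C=A\vee B\) is read off the quotient skeleton, the product comes from Proposition~\ref{prop:product-recovery}, the order from Theorem~\ref{thm:order-recovery-from-paired-transport}, and the residuals from the uniqueness in Proposition~\ref{prop:residuals-determined-by-order-product}. Your isomorphism-transfer formulation is a correct sharpening of ``up to the evident isomorphism'', which the paper's proof leaves implicit: glued component isomorphisms that respect the quotient-skeleton order, map \(A=\Sk(\Comp{A})\) onto \(A'\), and intertwine the shadow maps indexed by \(q\).
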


\begin{proof}
The universe is the disjoint union of the component universes.
The quotient skeleton determines the target component \(C=A\vee B\) for every pair of source components.
Product recovery gives \(xy\) inside \(\layer{C}\).
Theorem~\ref{thm:order-recovery-from-paired-transport} gives the ambient order.
Once the order and the product have been recovered, Proposition~\ref{prop:residuals-determined-by-order-product} recovers both residuals.
All reconstructed operations and the reconstructed order therefore agree with the original ones.
\end{proof}

\section{A separating comparison with primitive-layer and subsemilattice-steady visibility decompositions}
\label{sec:separating-comparison}

The preceding sections establish the canonical residual coherentization and the reconstruction data carried by it.

We now compare this construction with two stages of the balanced-residuated P\l{}onka-sum program: the primitive full-skeleton po-monoid construction of \cite{BGJPSm} and the steady, visibility-based semigroup construction of \cite{BGJPS}.
In both cases the ambient language is residuated, so the comparison is a genuine comparison of decomposition principles.
The point of comparison is structural: primitive exactness, steadiness, or visibility obtains coherence by imposing suitable local-unit exactness at a prescribed resolution, whereas the present method obtains coherence canonically, by forming the finest quotient of the full positive-idempotent skeleton forced by the product and residual \(\tau\)-value laws.

At primitive resolution, no coarsening is being performed.
In that case the present definitions reduce to the primitive \(\tau\)-value laws for the original \(\tau\)-layers: the local units of products and residuals must already be prescribed by the primitive positive-idempotent skeleton.
Equivalently, the primitive partition is already \(\tau\)-residuation-coherent exactly when the relevant primitive product and residual \(\tau\)-value conditions hold, namely the product condition together with both residual orientations.
At monoid level, this is precisely the product-and-residual local-unit exactness underlying the construction of \cite{BGJPSm}.
In the semigroup formulation of \cite{BGJPS}, the two residual orientations are equivalent under the hypotheses considered there, yielding condition \((H)\).
Thus steadiness or fibrancy imposes exactness at a chosen primitive or visible resolution, whereas the present construction obtains the required exactness by canonical coarsening.

The slogan is therefore
\[
 \text{steady exactness at a chosen resolution}
 \quad\leadsto\quad
 \text{residual coherence by canonical coarsening}.
\]

We now separate the remaining comparison points in turn.
The first subsection constructs a product example.
The following subsections interpret it against primitive full-skeleton steadiness and chosen subsemilattice visibility.

\subsection{The product example}

The example lies in the familiar class of finite commutative Boolean quantales, equivalently finite complete atomic Boolean residuated lattices with commutative multiplication.
It is built from complex algebras of two elementary commutative monoids.

\begin{example}[Residual coarsening and subsemilattice visibility]
\label{ex:C2_times_max3_residual_visibility}
Let \(C_2=\{0,a\}\) be the two-element group, written additively, and let
\(\mathbf B=\mathcal P(C_2)\) be its complex residuated lattice, that is, the
powerset algebra equipped with setwise multiplication and the corresponding
residuals; see \cite{GalatosJipsenKowalskiOno2007}.
Put \(u=\{0\}\) and \(g=C_2\).
Then \(\tau_{\mathbf B}(X)=u\) for the two singleton subsets of \(C_2\), while
\(\tau_{\mathbf B}(X)=g\) for \(X=\emptyset\) and \(X=g\).
A direct check gives
\[
 \tau_{\mathbf B}(X+Y)=\tau_{\mathbf B}(X)\tau_{\mathbf B}(Y),
 \qquad
 \tau_{\mathbf B}(X\backslash Y)=\tau_{\mathbf B}(X)\tau_{\mathbf B}(Y),
\]
and, by commutativity, the same holds for the right residual.

Let \(T=\{0,1,2\}\) with multiplication \(rs=\max\{r,s\}\), and let \(\mathbf D=\mathcal P(T)\) be its complex residuated lattice.
The positive idempotents of \(\mathbf D\) are
\[
 d_0=\{0\},\qquad h_1=\{0,1\},\qquad h_2=\{0,2\},\qquad k=T.
\]
The primitive \(\tau\)-partition of \(\mathbf D\) is \(\tau\)-multiplication-coherent, but it is not \(\tau\)-residuation-coherent.
Indeed,
\[
 \tau_{\mathbf D}(\{1\})=h_1,
 \qquad
 \tau_{\mathbf D}(\{0\})=d_0,
\]
whereas
\[
 \{1\}\backslash\{0\}=\emptyset,
 \qquad
 \tau_{\mathbf D}(\{1\}\backslash\{0\})=k.
\]
Thus residual coherence forces \(h_1\sim k\).
Similarly, \(\{0,2\}\backslash\{0\}=\emptyset\) forces \(h_2\sim k\).
The finite verification is transparent from the \(\tau\)-values: \(\tau_{\mathbf D}(Z)=d_0\) holds only for \(Z=\{0\}\), the three operations send the pair \(\{0\},\{0\}\) back to \(\{0\}\), and every other input pair has product and residual outputs whose \(\tau\)-value is one of \(h_1,h_2,k\).
Thus no witness forces \(d_0\) into the block containing \(h_1,h_2,k\).
Hence
\[
 \Cr(\mathbf D)=\{\{d_0\},\{h_1,h_2,k\}\}.
\]

Now put
\[
 \mathbf A=\mathbf B\times\mathbf D.
\]
Then \(\tau_{\mathbf A}\) is computed coordinatewise, and the primitive positive-idempotent skeleton is
\[
 \{u,g\}\times\{d_0,h_1,h_2,k\}.
\]
The first coordinate supplies one independent direction, while the second coordinate supplies a genuine residual-coherence obstruction.
Since both factors have primitive multiplication coherence, the primitive partition of \(\mathbf A\) is multiplication-coherent.
However, that primitive partition is not \(\tau\)-residuation-coherent: for
\[
 x=(\{0\},\{1\}),
 \qquad
 y=(\{0\},\{0\}),
\]
the product of the input local units is
\[
 (u,h_1)(u,d_0)=(u,h_1),
\]
whereas
\[
 \tau_{\mathbf A}(x\backslash y)=(u,k).
\]
Thus \((u,h_1)\) must be identified with \((u,k)\), and the analogous witness using \(\{0,2\}\) identifies \((u,h_2)\) with \((u,k)\).
The same witnesses with first-coordinate input of \(\tau_{\mathbf B}\)-value \(g\) force
\[
 (g,h_1)\sim(g,k),
 \qquad
 (g,h_2)\sim(g,k).
\]
No witness involves \((u,d_0)\) or \((g,d_0)\) in these forced residual collapses.
Consequently
\[
 \Cr(\mathbf A)=
 \bigl\{\{(u,d_0)\},\{(g,d_0)\}\bigr\}
 \cup
 \bigl\{\{(u,h_1),(u,h_2),(u,k)\},\{(g,h_1),(g,h_2),(g,k)\}\bigr\}.
\]
Thus \(\Cr(\mathbf A)\) has four blocks.
It is strictly coarser than the primitive partition, so residual coarsening is genuinely needed.
At the same time, \(\Cr(\mathbf A)\) still has a non-linear quotient skeleton: the quotient is the four-element diamond obtained from the two chains
\[
 u<g,
 \qquad
 \{d_0\}<\{h_1,h_2,k\}.
\]
Thus the residual coherentization performs exactly the necessary residual collapse in the second coordinate while preserving the independent diamond geometry.
\end{example}

\subsection{Comparison with primitive and subsemilattice-steady visibility}

The example separates the present construction from the primitive full-skeleton version of the balanced-residuated P\l{}onka-sum decomposition scheme introduced for po-monoids in \cite{BGJPSm}.
In that theory, at primitive resolution, the fibres must already be closed not only under multiplication but also under the residuals: the local units
$
 \tau(x\backslash y)
 $ and $
 \tau(x/y)
$
must already be determined by the local units \(\tau(x)\) and \(\tau(y)\).
The displayed witness violates exactly this requirement.
Here
\[
 \tau(x)=(u,h_1),
 \qquad
 \tau(y)=(u,d_0).
\]
At primitive resolution, the residual \(\tau\)-value law would require
\[
 \tau(x\backslash y)
 =
 \tau(x)\tau(y)
 =
 (u,h_1)(u,d_0)
 =
 (u,h_1).
\]
But the actual residual output satisfies
\[
 \tau(x\backslash y)=(u,k).
\]
Thus the primitive full-skeleton construction cannot use these fibres as its component system.
One must either choose a coarser visible subsemilattice, as in the subsemilattice-steady method of~\cite{BGJPS}, or pass canonically to the residual coherentization \(\Cr(\mathbf A)\).

For the subsemilattice-steady method, the natural coarse visible subsemilattice is
\[
 I=\{u,g\}\times\{d_0\}.
\]
This visibility forgets the nontrivial \(D\)-coordinate at the root.
For \(z=(X,Z)\in A\), the associated \(I\)-local unit is
\[
 u_z=(\tau_{\mathbf B}(X),d_0).
\]
Thus the second coordinate, where the residual obstruction lives, is invisible to the chosen subsemilattice.
The steadiness equations over \(I\) hold because the first coordinate satisfies the product and residual \(\tau\)-value laws, while the second coordinate is constantly projected to \(d_0\).
This gives the two visible fibres
\[
 A_{(p,d_0)}=B_p\times\mathcal P(T),
 \qquad p\in\{u,g\}.
\]
Equivalently, the visibility partition induced by \(I\) is
\[
 \mathcal P_I
 =
 \bigl\{\{u\}\times\{d_0,h_1,h_2,k\},
        \{g\}\times\{d_0,h_1,h_2,k\}\bigr\}.
\]
Hence the subsemilattice presentation treats the example at a coarser chosen visibility: it yields only the two visible fibres over \((u,d_0)\) and \((g,d_0)\), whereas \(\Cr(\mathbf A)\) has the four components displayed above.
The point is precisely that, relative to this chosen \(I\)-presentation, the four-component split is not merely postponed to a later visible stage.
The visible local-unit map has already replaced every full local unit \((p,r)\in\{u,g\}\times\{d_0,h_1,h_2,k\}\) by its projection
\[
 (p,r)\longmapsto (p,d_0).
\]
Thus the idempotents
\[
 d_0,\qquad h_1,\qquad h_2,\qquad k
\]
of the \(\mathbf D\)-coordinate are not separate visible skeleton points in this presentation.
They occur only through their common projection to \(d_0\).
Consequently the residual obstruction carried by these four idempotents is not available as a further visible skeleton inside the same chosen subsemilattice-steady presentation.
In this fixed presentation, the missing \(\mathbf D\)-split is therefore not postponed; it is suppressed by the initial visibility choice.

The preceding argument concerns the fixed visible subsemilattice \(I=\{u,g\}\times\{d_0\}\).
To rule out the possibility that the missing \(\mathbf D\)-split is recovered by choosing a different nontrivial visible skeleton inside the obstructing coordinate, the next subsection isolates the \(\mathbf D\)-factor itself.
It shows that \(\mathbf D\) admits no nontrivial subsemilattice-steady presentation which could account for the forced residual split.

\begin{remark}[Chosen visibility versus canonical forced coarsening]
\label{rem:chosen-visibility-versus-forced-coarsening}
The preceding example should not be read as saying that the algebra lies outside the subsemilattice-steady framework of~\cite{BGJPS}.
Rather, that framework treats the same algebra at a coarser chosen visibility.
The obstructing \(D\)-coordinate is not eliminated by the subsemilattice-steady presentation; it is made invisible at the root by projecting the full local unit onto a smaller visible subsemilattice.

This explains the structural difference between the two constructions.
A subsemilattice-steady presentation first chooses a visible subsemilattice
$
 I\subseteq\Sk(\mathbf A)
$
and replaces the full local unit \(\tau(a)\) by its projection
$
 u_a=\max\{p\in I:pa=a\}
     =\max\{p\in I:ap=a\}.
$
Thus a quotient of the full skeleton is built into the input data.
The product and residual local-unit laws are then required only after this projection.

Residual coherentization proceeds in the opposite order.
It starts from the full positive-idempotent skeleton and quotients canonically only those distinctions which are forced away by the product and residual \(\tau\)-value laws.
Thus the contrast is chosen visibility versus canonical forced coarsening: the subsemilattice-steady method obtains coherence by selecting a coarser visible skeleton, whereas \(\Cr(\mathbf A)\) keeps the full skeleton visible and then coarsens only where the residual \(\tau\)-value laws require it.

The reason residual coherentization refines every subsemilattice-steady visibility quotient is that such a presentation builds a coherent quotient of the full positive-idempotent skeleton into the input data.
The next section shows that the visibility partition induced by a subsemilattice-steady presentation is already \(\tau\)-residuation-coherent, and hence, by canonicality, is coarser than \(\Cr(\mathbf A)\).
The same restriction argument then propagates this comparison frontierwise through the corresponding decomposition trees.

The product example explains how a chosen subsemilattice visibility can hide the obstructing coordinate at the root.
It also indicates why the comparison is not a mere width--height trade-off: the missing split is not produced by continuing the same chosen-visibility presentation.
The next example removes this possible ambiguity completely by isolating the \(\mathbf D\)-coordinate and showing that it admits no nontrivial subsemilattice-steady presentation.
\end{remark}

\subsection{Beyond nontrivial chosen visibility}

The preceding product example shows that residual coherentization can preserve an independent diamond in the quotient skeleton while separating canonical forced coarsening from chosen subsemilattice visibility.
It also shows that the subsemilattice-steady method may obtain steadiness by hiding the obstructing coordinate at the root.
However, it does not rule out a width--height trade-off: the coarser first-level fibres still contain the obstructing \(\mathbf D\)-coordinate and may be refined further by a more discriminating residuated analysis.
The next example removes this possible interpretation.
It isolates the obstructing max-semilattice complex algebra itself.
For this algebra no nontrivial subsemilattice of the positive-idempotent skeleton gives a steady presentation: every nontrivial choice of visible skeleton still sees a residual \(\tau\)-value failure.
Nevertheless the residual coherentization is nontrivial and canonical.
Thus the example separates chosen coarse visibility from canonical coarsening in a sharper form: the subsemilattice method has only the trivial presentation, whereas \(\Cr\) still produces a proper two-block decomposition.

\begin{example}[Residual coarsening beyond all nontrivial subsemilattice-steady presentations]
\label{ex:max3_no_nontrivial_subsemilattice_steady}
Let \(T=\{0,1,2\}\) be the three-element semilattice monoid with multiplication \(rs=\max\{r,s\}\), and let
\[
 \mathbf D=\mathcal P(T)
\]
be its complex residuated lattice.
Thus multiplication is setwise multiplication, and
\[
 X\backslash Y=\{z\in T:X\{z\}\subseteq Y\},
 \qquad
 X/Y=\{z\in T:\{z\}Y\subseteq X\}.
\]
The positive idempotents are
\[
 d_0=\{0\},\qquad h_1=\{0,1\},\qquad h_2=\{0,2\},\qquad k=T.
\]

The primitive \(\tau\)-partition is multiplication-coherent, but it is not residuation-coherent.
Indeed,
\[
 \tau(\{1\})=h_1,
 \qquad
 \tau(\{0\})=d_0,
\]
whereas
\[
 \{1\}\backslash\{0\}=\emptyset,
 \qquad
 \tau(\emptyset)=k.
\]
Thus residual coherence forces \(h_1\sim k\).
Similarly, \(\{0,2\}\backslash\{0\}=\emptyset\) forces \(h_2\sim k\).
After the identifications \(h_1\sim k\) and \(h_2\sim k\), let
\[
 E=\{d_0\},
 \qquad
 U=\{h_1,h_2,k\}.
\]
If both inputs have \(\tau\)-value in \(E\), then both inputs are \(\{0\}\), and the product and both residuals again have \(\tau\)-value \(d_0\).
If at least one input has \(\tau\)-value in \(U\), then the product and both residuals have \(\tau\)-value in \(U\).
Indeed, using
\[
 X\backslash Y=\{z:X\{z\}\subseteq Y\},
 \qquad
 X/Y=\{z:\{z\}Y\subseteq X\},
\]
one checks the following two-block output table for each of the three operations \(XY\), \(X\backslash Y\), and \(X/Y\):
\[
\begin{array}{c|cc}
 & E & U\\
\hline
E & E & U\\
U & U & U
\end{array}
\]
where the row records the \(\tau\)-block of \(X\) and the column records the \(\tau\)-block of \(Y\).
Thus no product or residual witness forces \(d_0\) into the block \(\{h_1,h_2,k\}\), and hence
\[
 \Cr(\mathbf D)=\{\{d_0\},\{h_1,h_2,k\}\}.
\]
Thus the residual coherentization performs a genuine collapse.

We now show that this collapse is not explained by steadiness over a nontrivial subsemilattice of the skeleton.
Let \(I\) be a subsemilattice of \(\{d_0,h_1,h_2,k\}\) over which \(\mathbf D\) is subsemilattice-steady in the sense of \cite{BGJPS}.
Since \(\mathbf D\) is a monoid with identity \(d_0\), the \(I\)-local unit of \(d_0\) is \(d_0\), so \(d_0\in I\).
Assume that \(I\) is nontrivial, and let \(t\) be the greatest element of \(I\).
Then \(t\in\{h_1,h_2,k\}\).

If \(t=h_1\), put \(a=\{0\}\) and \(b=\{0,2\}\).
Then \(u_a=d_0\) and \(u_b=d_0\), since \(h_1b\ne b\).
But
\[
 b\backslash a=\emptyset,
\]
and every element of \(I\) fixes \(\emptyset\), so \(u_{b\backslash a}=u_\emptyset=t=h_1\).
Therefore
\[
 u_{b\backslash a}=h_1\ne d_0=u_au_b,
\]
contradicting the residual steadiness law.

If \(t=h_2\) or \(t=k\), put \(a=\{0\}\) and \(b=\{1\}\).
Again
\[
 b\backslash a=\emptyset,
\]
so \(u_{b\backslash a}=u_\emptyset=t\).
On the other hand \(u_a=d_0\), and \(u_b\) is either \(d_0\) or \(h_1\), according as \(h_1\notin I\) or \(h_1\in I\).
In both cases
\[
 u_au_b<t.
\]
Hence
\[
 u_{b\backslash a}\ne u_au_b,
\]
again contradicting the residual steadiness law.

Thus \(\mathbf D\) is not steady over any nontrivial subsemilattice of its positive-idempotent skeleton.
The only subsemilattice-steady presentation is the trivial one \(I=\{d_0\}\), which collapses the whole positive-idempotent skeleton before the decomposition begins.
By contrast, \(\Cr(\mathbf D)\) is canonical and strictly finer: it keeps \(\{d_0\}\) separate and collapses only the residually forced block \(\{h_1,h_2,k\}\).
\end{example}

\subsection{Bridge to the frontier comparison}

The two examples together prepare the general comparison in the next section.
The product example shows how a chosen subsemilattice-steady presentation can hide distinctions at the root by projecting the full positive-idempotent skeleton onto a smaller visible skeleton.
For the specific visible skeleton used there, the obstructing \(\mathbf D\)-coordinate is not available as a further visible skeleton inside that presentation.
The max-semilattice example removes the remaining ambiguity: it shows that the \(\mathbf D\)-factor itself admits no nontrivial subsemilattice-steady presentation, although residual coherentization still gives a proper canonical two-block decomposition.

The residual coherentization exposes, at the first residual level, all skeleton distinctions not forced away by residuation coherence itself.
This is the mechanism behind the general frontier comparison proved in the next section.
There the point is the difference between chosen visibility and canonical coarsening.
The next section proves that residual coherentization refines every subsemilattice-steady decomposition arising from a chosen visibility quotient.
Under iteration, the \(\tau\)-residual frontier refines the corresponding subsemilattice-steady frontier at every finite stage.
Thus the residual method uses no prior choice of visible subsemilattice and loses no distinction except those forced by the product and residual \(\tau\)-value laws.

\section{Residual coherentization refines subsemilattice-steady visibility}
\label{sec:Cr-refines-subsemilattice-steady}

We now make precise the comparison suggested by the preceding examples.
The issue is the difference between chosen visibility and canonical forced coarsening.
A subsemilattice-steady presentation starts by choosing a visible subsemilattice of the positive-idempotent skeleton and reads each local unit through the corresponding projected local unit.
Residual coherentization does not make such a choice.
It starts from the full positive-idempotent skeleton and identifies only those idempotents which are forced to be identified by the product and residual \(\tau\)-value laws.

The section has three steps.
First we prove the comparison at the root: every subsemilattice-steady visibility quotient is already a \(\tau\)-residuation-coherent quotient of the full positive-idempotent skeleton, and hence lies above \(\Cr(\mathbf A)\) in the refinement order.
Then we isolate the restriction mechanism needed for iteration.
Finally we compare the iterated constructions frontierwise.
Terminal components are carried forward in the frontier, so the comparison is not a comparison of graph-theoretic depth levels.

\subsection{First-level visibility quotients}

\begin{definition}[Visibility partition induced by a subsemilattice]
\label{def:visibility-partition}
Let \(\mathbf A\) be a balanced residuated partially ordered semigroup, and let
\[
 I\subseteq \Sk(\mathbf A)
\]
be a nonempty subsemilattice satisfying the visibility condition used in subsemilattice-steady presentations: for every \(r\in\Sk(\mathbf A)\), the maximum
\[
 \pi_I(r)=\max\{p\in I:p\le r\}
\]
exists.
Such a subsemilattice \(I\) will be called \emph{visible}.
The \emph{visibility partition induced by \(I\)} is the partition \(\mathcal P_I\) of \(\Sk(\mathbf A)\) into the fibres of \(\pi_I\):
\[
 r\equiv_I s
 \quad\Longleftrightarrow\quad
 \pi_I(r)=\pi_I(s).
\]
\end{definition}

\begin{definition}[Subsemilattice-steady visibility data]
\label{def:subsemilattice-steady-visibility-data}
Let \(\mathbf A\) be a balanced residuated partially ordered semigroup, and let \(I\subseteq\Sk(\mathbf A)\) be a visible subsemilattice in the sense of Definition~\ref{def:visibility-partition}.
For \(a\in A\), put
\[
 u_a=\pi_I(\tau(a)).
\]
Following \cite{BGJPS} we say that \(\mathbf A\) is \emph{subsemilattice-steady over \(I\)} if, for all \(a,b\in A\),
\[
 u_{ab}=u_au_b,
 \qquad
 u_{a\backslash b}=u_au_b,
 \qquad
 u_{a/b}=u_au_b.
\]
The associated visibility fibres are
\[
 A^I_p=\{a\in A:u_a=p\},
 \qquad p\in I.
\]
For the iterated frontier comparison, a subsemilattice-steady decomposition step means these data together with the standard component condition that the nonempty fibres \(A^I_p\), equipped with the inherited order, multiplication, and residuals, are induced component subalgebras of \(\mathbf A\).
The first-level refinement theorem below uses only the three displayed local-unit identities; the frontier theorem uses the induced-component condition.
This is the subsemilattice-steady visibility condition from~\cite{BGJPS} in the form used below.
The comparison theorems that follow use only the displayed local-unit identities and the induced-component condition stated here; no other result from \cite{BGJPS} is used.
\end{definition}

\begin{theorem}[First-level comparison with subsemilattice-steady decompositions]
\label{thm:Cr-refines-subsemilattice-steady}
Let \(\mathbf A\) be a balanced residuated partially ordered semigroup, and let
\[
 I\subseteq \Sk(\mathbf A)
\]
be a nonempty visible subsemilattice over which \(\mathbf A\) is subsemilattice-steady in the sense of Definition~\ref{def:subsemilattice-steady-visibility-data}.
Then the visibility partition \(\mathcal P_I\) is \(\tau\)-residuation-coherent.
Consequently,
\[
 \Cr(\mathbf A)\le \mathcal P_I,
\]
where \(\le\) denotes refinement of partitions.
Equivalently, every component of the first-level \(\Cr\)-decomposition is contained in a component of the subsemilattice-steady decomposition over \(I\).
In particular, if \(\Sk(\mathbf A)\) is finite, then
\[
 |\Cr(\mathbf A)|\ge |\mathcal P_I|=|I|.
\]
\end{theorem}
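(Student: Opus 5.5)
The plan is to verify directly that \(\mathcal P_I\) is \(\tau\)-residuation-coherent and then invoke Theorem~\ref{prop:canonicality-Cr}. The key observation is that the blocks of \(\mathcal P_I\) are exactly the fibres of \(\pi_I\). Hence, for a block \(P\in\mathcal P_I\) with \(\pi_I\)-value \(p\in I\), the component \(\layer{P}\) coincides with the visibility fibre \(A^I_p=\{a:u_a=p\}\), because \(u_a=\pi_I(\tau(a))\).

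First I fix blocks \(P,Q\in\mathcal P_I\) with \(\pi_I\)-values \(p,q\in I\), and take \(x\in\layer{P}\), \(y\in\layer{Q}\). Then \(u_x=p\) and \(u_y=q\). The three steadiness identities give
\[
 \pi_I(\tau(xy))=\pi_I(\tau(x\backslash y))=\pi_I(\tau(x/y))=u_xu_y=pq .
\]
Since \(I\) is a subsemilattice, \(pq=p\vee q\) lies in \(I\), and by Lemma~\ref{lem:basic-positive-idempotent-facts} it is the join in the skeleton. Consequently every element of \(P\star_\tau Q\) lies in the single fibre \(\pi_I^{-1}(pq)\), which is a block of \(\mathcal P_I\). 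This fibre is nonempty, since \(\pi_I(pq)=pq\) because \(pq\in I\). So \(\operatorname{Out}_{\mathcal P_I}(P,Q)\) is a singleton, and \(\mathcal P_I\) is \(\tau\)-residuation-coherent.

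Second, Theorem~\ref{prop:canonicality-Cr} immediately yields \(\Cr(\mathbf A)\le\mathcal P_I\). Translating this to components: each \(\Cr\)-block \(A\) lies inside some \(\mathcal P_I\)-block \(P\), so \(\layer{A}\subseteq\layer{P}=A^I_p\). This gives the stated containment of components.

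Third, for the cardinality claim I note that \(\pi_I\colon\Sk(\mathbf A)\to I\) is surjective, since \(\pi_I(p)=p\) for \(p\in I\). Hence \(\mathcal P_I\) has exactly \(|I|\) blocks. A refinement of a finite partition has at least as many blocks, so \(|\Cr(\mathbf A)|\ge|I|\) whenever \(\Sk(\mathbf A)\) is finite.

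I expect no serious obstacle. The only point needing care is the identification of \(\tau\)-blocks of \(\mathcal P_I\) with the values of \(u\), that is, checking that \(u_a\) depends only on the \(\mathcal P_I\)-block of \(\tau(a)\). This holds by the definition of \(\equiv_I\). Apart from that, one must make sure that \(pq\) is a \(\pi_I\)-value, which I will justify explicitly via the subsemilattice property.
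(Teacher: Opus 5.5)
Your proposal is correct and follows essentially the same route as the paper. The steadiness identities place every product and residual $\tau$-output from $\layer{P}\times\layer{Q}$ in the single fibre $\pi_I^{-1}(pq)$, canonicality of $\Cr(\mathbf A)$ then gives the refinement, and $\pi_I(p)=p$ for $p\in I$ gives $|\mathcal P_I|=|I|$. The paper also checks the equivalence $p\le\tau(a)\iff pa=a=ap$, which ties $\pi_I(\tau(a))$ to the original form of the $I$-local unit; you do not need it, since Definition~\ref{def:subsemilattice-steady-visibility-data} already defines $u_a=\pi_I(\tau(a))$.
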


\begin{proof}
Let \(a\in A\).
By Definition~\ref{def:subsemilattice-steady-visibility-data},
\[
 u_a=\pi_I(\tau(a)).
\]
Equivalently,
\[
 u_a=\max\{p\in I:p\le \tau(a)\}.
\]
Since
\[
 \tau(a)=a\backslash a=a/a,
\]
the condition \(p\le \tau(a)\) is equivalent to \(pa=a=ap\).
Indeed, \(pa=a\) implies \(p\le a/a\) by residuation, and conversely \(p\le a/a\) gives \(pa\le a\), while positivity of \(p\) gives \(a\le pa\).
The right-sided condition is analogous.

We show that \(\mathcal P_I\) is \(\tau\)-residuation-coherent.
Let \(a,b\in A\).
Since \(\mathbf A\) is subsemilattice-steady over \(I\), the \(I\)-local units satisfy
\[
 u_{ab}=u_au_b,
 \qquad
 u_{a\backslash b}=u_au_b,
 \qquad
 u_{a/b}=u_au_b.
\]
Using \(u_x=\pi_I(\tau(x))\), these identities become
\[
 \pi_I(\tau(ab))=
 \pi_I(\tau(a))\,\pi_I(\tau(b)),
\]
\[
 \pi_I(\tau(a/b))=
 \pi_I(\tau(a))\,\pi_I(\tau(b)),
\]
and
\[
 \pi_I(\tau(a\backslash b))=
 \pi_I(\tau(a))\,\pi_I(\tau(b)).
\]
Now let \(P,Q\) be two blocks of \(\mathcal P_I\), and let \(a,a'\in\layer{P}\) and \(b,b'\in\layer{Q}\).
Then
\[
 \pi_I(\tau(a))=\pi_I(\tau(a')),
 \qquad
 \pi_I(\tau(b))=\pi_I(\tau(b')).
\]
The three displayed identities therefore show that the \(\mathcal P_I\)-block of each of
\[
 \tau(ab),\qquad \tau(a\backslash b),\qquad \tau(a/b)
\]
is the same as the corresponding block obtained from \(a'\) and \(b'\).
Equivalently, for each pair of input blocks \(P,Q\), all product and residual \(\tau\)-outputs lie in the single \(\mathcal P_I\)-block represented by
\[
 \pi_I(\tau(a))\,\pi_I(\tau(b)).
\]
Thus \(\mathcal P_I\) is \(\tau\)-residuation-coherent.

By Proposition~\ref{prop:canonicality-Cr}, \(\Cr(\mathbf A)\) is the finest \(\tau\)-residuation-coherent partition of \(\Sk(\mathbf A)\).
Since \(\mathcal P_I\) is such a partition, we obtain
\[
 \Cr(\mathbf A)\le \mathcal P_I.
\]

It remains only to translate this skeleton refinement into the component statement.
If \(C\in\Cr(\mathbf A)\), then \(C\) is contained in a unique \(\mathcal P_I\)-block, say \(\pi_I^{-1}(p)\).
Therefore every element \(x\) in the \(\Cr\)-component
\[
 A_C^{\Cr}:=\{x\in A:\tau(x)\in C\}
\]
satisfies
\[
 u_x=\pi_I(\tau(x))=p.
\]
Hence \(A_C^{\Cr}\) is contained in the subsemilattice-steady fibre
\[
 A_p^I=\{x\in A:u_x=p\}.
\]
This proves the component containment.
Finally, each \(p\in I\) lies in its own fibre, because \(\pi_I(p)=p\).
Thus \(\mathcal P_I\) has exactly \(|I|\) blocks.
If \(\Sk(\mathbf A)\) is finite, refinement gives
\[
 |\Cr(\mathbf A)|\ge |\mathcal P_I|=|I|.
\]
\end{proof}

\begin{remark}[Projection visibility and quotient coherentization]
\label{rem:meaning-Cr-refines-subsemilattice}
The theorem does not say that the two decompositions choose the same components or that there is a canonical bijection between their nodes.
It says that the \(\Cr\)-partition is a refinement of the partition induced by the chosen visible subsemilattice.
Thus the \(\Cr\)-decomposition cannot have fewer first-level components than any subsemilattice-steady presentation.

The reason is structural.
A subsemilattice-steady presentation first chooses a visible subsemilattice
\[
 I\subseteq \Sk(\mathbf A)
\]
and replaces the full local unit \(\tau(a)\) by its visible projection
\[
 u_a=\pi_I(\tau(a)).
\]
It then asks the product and residual local-unit laws to hold after this projection.
Thus the construction keeps the chosen visible part \(I\) and reads the remaining local-unit data through the projection onto \(I\).
In this sense it is projection-like: it is close to passing to the fixed part of an interior operator, or conucleus.

Residual coherentization proceeds in the quotient direction.
It starts with the full positive-idempotent skeleton and then canonically identifies only those idempotents which are forced to be identified by the product and residual \(\tau\)-value laws.
Thus \(\Cr(\mathbf A)\) is the finest quotient of the full skeleton on which the three trigger laws
\[
 xy,\qquad x\backslash y,\qquad x/y
\]
have well-defined \(\tau\)-values at the block level.
This is why every subsemilattice-steady visibility quotient lies above \(\Cr(\mathbf A)\) in the refinement order.
\end{remark}

\subsection{Restriction to induced components}

\begin{definition}[Induced component subalgebra]
\label{def:induced-component-subalgebra}
Let \(\mathbf B\) be a balanced residuated partially ordered semigroup.
An \emph{induced component subalgebra} of \(\mathbf B\) is a balanced residuated partially ordered semigroup \(\mathbf C\) whose carrier \(C\) is a subset of \(B\), and whose order, multiplication, and residuals are the restrictions of the corresponding order and operations of \(\mathbf B\).
We also require
\[
 \Sk(\mathbf C)=\Sk(\mathbf B)\cap C .
\]
Then the local-unit map of \(\mathbf C\) is automatically the restriction of the local-unit map of \(\mathbf B\), since for \(x\in C\),
\[
 \tau_{\mathbf C}(x)
 =
 x\backslash_{\mathbf C}x
 =
 x\backslash_{\mathbf B}x
 =
 \tau_{\mathbf B}(x).
\]
\end{definition}

\begin{lemma}[Residual-coherent components are induced]
\label{lem:coherent-components-induced}
Let \(\Omega\) be a \(\tau\)-residuation-coherent skeleton partition of \(\Sk(\M)\), and let \(A\in\Omega\).
Then the component algebra \(\Comp{A}\) is an induced component subalgebra of \(\M\) in the sense of Definition~\ref{def:induced-component-subalgebra}.
More generally, the same statement holds inside any induced component subalgebra.
\end{lemma}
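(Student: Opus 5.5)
The plan is to check the three clauses of Definition~\ref{def:induced-component-subalgebra} directly for \(\Comp{A}\), reusing Proposition~\ref{prop:component-algebras} almost entirely. The requirements are:
\begin{itemize}
\item the carrier \(\layer{A}\) is a subset of \(M\) and carries the restricted order and operations;
\item \(\Comp{A}\) is a balanced residuated partially ordered semigroup;
\item the skeleton identity \(\Sk(\Comp{A})=\Sk(\M)\cap\layer{A}\) holds.
\end{itemize}
The first clause holds by Definition~\ref{def:component-algebra}. Closure of \(\layer{A}\) under the three operations, which is what makes the restrictions well defined, comes from \(\tau\)-residuation coherence applied to the pair \((A,A)\). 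Its target block is \(A\), because \(u=uu=u\backslash u=u/u\) and \(\tau(u)=u\) for any \(u\in A\).

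The second and third clauses are both stated in Proposition~\ref{prop:component-algebras}. That proposition gives that \(\Comp{A}\) is balanced residuated and that \(\Sk(\Comp{A})=A=\Sk(\M)\cap\layer{A}\), which is exactly the required skeleton identity. The final remark of the definition, that \(\tau_{\Comp{A}}\) is the restriction of \(\tau\), then follows automatically.

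For the relative statement, let \(\mathbf B\) be an induced component subalgebra of \(\M\), and let \(\Omega\) be a \(\tau\)-residuation-coherent partition of \(\Sk(\mathbf B)\) with \(A\in\Omega\). I will run the same argument inside \(\mathbf B\), since \(\mathbf B\) is itself a balanced residuated partially ordered semigroup and Proposition~\ref{prop:component-algebras} applies to it. This yields that \(\mathbf B\)'s component at \(A\) is an induced component subalgebra of \(\mathbf B\), with \(\Sk(\Comp{A})=\Sk(\mathbf B)\cap\layer{A}\).

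If one also wants the conclusion relative to \(\M\), transitivity needs a short check, and I expect this to be the only step that is not pure citation. Restrictions of restrictions are restrictions, so the order and operations pose no problem. For the skeleton, combine the two identities:
\[
\Sk(\Comp{A})=\Sk(\mathbf B)\cap\layer{A}=(\Sk(\M)\cap B)\cap\layer{A}=\Sk(\M)\cap\layer{A},
\]
where the last step uses \(\layer{A}\subseteq B\). The subtle point is that positivity of an element inside \(\mathbf B\) need not imply positivity in \(\M\). This is handled not by any argument about positivity but by the skeleton clause in the definition of an induced component subalgebra of \(\M\), which lets one identify positive idempotents via \(\tau_{\mathbf B}(p)=\tau_{\M}(p)=p\). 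So I would cite that clause explicitly at this step.
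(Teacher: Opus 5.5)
Your proposal is correct and follows the paper's proof. The order and operations are restricted by the definition of the component algebra. The skeleton identity \(\Sk(\Comp{A})=\Sk(\M)\cap\layer{A}\) comes from Proposition~\ref{prop:component-algebras}. The relative clause is obtained by running the same argument inside an induced component subalgebra.

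Your extra transitivity check back to \(\M\) is not required by the statement, but it is correct. The positivity worry is moot: \(\tau_{\mathbf B}(p)=\tau_{\M}(p)=p\) already makes every positive idempotent of \(\mathbf B\) a positive idempotent of \(\M\).
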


\begin{proof}
By definition, the order, multiplication, and residuals of \(\Comp{A}\) are the restrictions of the corresponding order and operations of \(\M\) to \(\layer{A}\).
The required skeleton identity
\[
 \Sk(\Comp{A})=\Sk(\M)\cap\layer{A}
\]
is Proposition~\ref{prop:component-algebras}.
The same argument applies after replacing \(\M\) by any induced component subalgebra.
\end{proof}

\begin{lemma}[Nested induced component subalgebras]
\label{lem:nested-induced-component-subalgebras}
Let \(\mathbf B\) be a balanced residuated partially ordered semigroup.
If \(\mathbf R\) and \(\mathbf C\) are induced component subalgebras of \(\mathbf B\) and \(R\subseteq C\), then \(\mathbf R\) is an induced component subalgebra of \(\mathbf C\).
\end{lemma}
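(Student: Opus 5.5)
We have to verify the clauses of Definition~\ref{def:induced-component-subalgebra} for \(\mathbf R\) viewed inside \(\mathbf C\). There are three of them.

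\emph{First clause.} \(\mathbf R\) must be a balanced residuated partially ordered semigroup. This holds by hypothesis, since \(\mathbf R\) is an induced component subalgebra of \(\mathbf B\).

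\emph{Second clause.} The carrier \(R\) must be a subset of \(C\), and the order and operations of \(\mathbf R\) must be the restrictions of those of \(\mathbf C\). The inclusion \(R\subseteq C\) is assumed. For the operations we use transitivity of restriction. The order, multiplication and residuals of \(\mathbf C\) are the restrictions of those of \(\mathbf B\) to \(C\). The structure of \(\mathbf R\) is the restriction of that of \(\mathbf B\) to \(R\). Since \(R\subseteq C\), restricting the \(\mathbf C\)-structure further to \(R\) gives exactly the \(\mathbf B\)-structure restricted to \(R\). For example, for \(x,y\in R\) we have \(x\backslash_{\mathbf R}y=x\backslash_{\mathbf B}y=x\backslash_{\mathbf C}y\), and the same holds for \(\le\), \(\cdot\) and \(/\).

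\emph{Third clause.} We need the skeleton identity \(\Sk(\mathbf R)=\Sk(\mathbf C)\cap R\). We obtain it by intersecting the two given identities:
\[
 \Sk(\mathbf C)\cap R=(\Sk(\mathbf B)\cap C)\cap R=\Sk(\mathbf B)\cap R=\Sk(\mathbf R).
\]
The first equality is the skeleton identity for \(\mathbf C\). The middle equality uses \(R\subseteq C\). The last equality is the skeleton identity for \(\mathbf R\).

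Together these three clauses show that \(\mathbf R\) is an induced component subalgebra of \(\mathbf C\).

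The argument has no substantive obstacle. The only point requiring care is the third clause: it must be checked through the intersection computation above rather than inferred from the restriction of the operations alone. The reason is that positivity in \(\mathbf C\) and positivity in \(\mathbf R\) quantify over different carriers, so the skeleta cannot be compared directly from the operations.
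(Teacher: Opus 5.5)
Your proof is correct and is essentially the paper's argument: the order and operations of \(\mathbf R\) are restrictions of those of \(\mathbf B\), hence of \(\mathbf C\), and the skeleton identity comes from the same computation \(\Sk(\mathbf C)\cap R=(\Sk(\mathbf B)\cap C)\cap R=\Sk(\mathbf B)\cap R=\Sk(\mathbf R)\). One minor point: your closing remark overstates the difficulty. In the balanced setting the skeleton identity can in fact be read off from the restricted operations: if \(p\in\Sk(\mathbf R)\), then \(p=p\backslash p\) computed in \(\mathbf R\), hence also in \(\mathbf C\), and \(\tau\)-values in \(\mathbf C\) are positive idempotents of \(\mathbf C\), as in the proof of Proposition~\ref{prop:component-algebras}. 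This does not affect your argument.
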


\begin{proof}
The order, multiplication, and residuals on \(\mathbf R\) and on \(\mathbf C\) are all inherited from \(\mathbf B\), so the operations of \(\mathbf R\) are also the restrictions of the operations of \(\mathbf C\).
For the skeletons,
\[
 \Sk(\mathbf R)
 =
 \Sk(\mathbf B)\cap R
 =
 \bigl(\Sk(\mathbf B)\cap C\bigr)\cap R
 =
 \Sk(\mathbf C)\cap R .
\]
Thus \(\mathbf R\) is induced in \(\mathbf C\).
\end{proof}

\begin{lemma}[Restriction of \(\tau\)-residuation coherence]
\label{lem:restriction-of-tau-res-coherence}
Let \(\mathbf B\) be a balanced residuated partially ordered semigroup, and let \(\mathbf C\) be an induced component subalgebra of \(\mathbf B\).
If \(\mathcal P\) is a \(\tau\)-residuation-coherent partition of \(\Sk(\mathbf B)\), then
\[
 \mathcal P\!\upharpoonright C
 =
 \{\,P\cap\Sk(\mathbf C):P\in\mathcal P,
        P\cap\Sk(\mathbf C)\ne\varnothing\,\}
\]
is a \(\tau\)-residuation-coherent partition of \(\Sk(\mathbf C)\).
\end{lemma}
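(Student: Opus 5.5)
The plan is to verify the two requirements directly: that \(\mathcal P\!\upharpoonright C\) is a partition of \(\Sk(\mathbf C)\), and that each output-block set \(\operatorname{Out}_{\mathcal P\upharpoonright C}(P',Q')\) is a singleton. The main tools are the two defining features of an induced component subalgebra. First, \(\tau_{\mathbf C}\) is the restriction of \(\tau_{\mathbf B}\). Second, \(\Sk(\mathbf C)=\Sk(\mathbf B)\cap C\).

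\textbf{Partition.} Since \(\Sk(\mathbf C)\subseteq\Sk(\mathbf B)\) and \(\mathcal P\) partitions \(\Sk(\mathbf B)\), the nonempty traces \(P\cap\Sk(\mathbf C)\) are pairwise disjoint. They also cover \(\Sk(\mathbf C)\), so \(\mathcal P\!\upharpoonright C\) is a partition of \(\Sk(\mathbf C)\).

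\textbf{Coherence.} Fix blocks \(P'=P\cap\Sk(\mathbf C)\) and \(Q'=Q\cap\Sk(\mathbf C)\) of the restricted partition. The layers are computed inside \(\mathbf C\), namely \(\layer{P'}^{\mathbf C}=\{x\in C:\tau_{\mathbf C}(x)\in P'\}\). Because \(\tau_{\mathbf C}=\tau_{\mathbf B}\!\upharpoonright C\), and because \(\tau_{\mathbf C}(x)\in\Sk(\mathbf C)\) automatically, this layer equals \(\layer{P}^{\mathbf B}\cap C\), and hence is contained in \(\layer{P}^{\mathbf B}\).

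Next, the operations of \(\mathbf C\) are restrictions of those of \(\mathbf B\). So for \(x\in\layer{P'}^{\mathbf C}\) and \(y\in\layer{Q'}^{\mathbf C}\), the three outputs \(xy\), \(x\backslash y\), \(x/y\) are the \(\mathbf B\)-outputs, and they lie in \(C\). Their \(\tau_{\mathbf C}\)-values therefore equal their \(\tau_{\mathbf B}\)-values. Two consequences follow:
\begin{itemize}
\item these values lie in \(\Sk(\mathbf C)\), since \(\mathbf C\) is balanced;
\item they belong to \(P\star_\tau Q\) computed in \(\mathbf B\), i.e.\ \(P'\star_\tau^{\mathbf C}Q'\subseteq (P\star_\tau^{\mathbf B}Q)\cap\Sk(\mathbf C)\). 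This is essentially Lemma~\ref{lem:tau-saturated-operation-output-monotone} transported along the inclusion.
\end{itemize}

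By \(\tau\)-residuation coherence of \(\mathcal P\), the set \(P\star_\tau^{\mathbf B}Q\) lies in a single block \(R=P\ast_{\mathcal P}Q\). Hence every restricted output lies in \(R\cap\Sk(\mathbf C)\). This trace is a single block of \(\mathcal P\!\upharpoonright C\), and it is nonempty because the output set is nonempty: for instance \(u\vee v=uv\) with \(u\in P'\), \(v\in Q'\) lies in it. Therefore \(\operatorname{Out}_{\mathcal P\upharpoonright C}(P',Q')\) is the singleton \(\{R\cap\Sk(\mathbf C)\}\).

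\textbf{Main obstacle.} The only delicate point is the bookkeeping that identifies \(\mathbf C\)-layers and \(\mathbf C\)-local units with their ambient counterparts. Both the induced-component hypothesis \(\Sk(\mathbf C)=\Sk(\mathbf B)\cap C\) and the restriction property of \(\tau\) from Definition~\ref{def:induced-component-subalgebra} are needed for this identification. Once it is in place, coherence is simply inherited.
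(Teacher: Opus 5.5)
Your proposal is correct and follows essentially the same route as the paper. Like the paper, you identify the \(\mathbf C\)-operations and \(\tau_{\mathbf C}\) with their ambient counterparts, place the three trigger outputs in the unique ambient output block \(P\ast_{\mathcal P}Q\), and intersect with \(\Sk(\mathbf C)\). You also check two points the paper leaves implicit: that \(\mathcal P\!\upharpoonright C\) is a partition, and that the restricted output set is nonempty.
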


\begin{proof}
Since \(\mathbf C\) is induced, multiplication and both residuals in \(\mathbf C\) are the restrictions of the corresponding operations of \(\mathbf B\).
The local-unit map is restricted as well, by Definition~\ref{def:induced-component-subalgebra}.
Let \(P,Q\) be blocks of \(\mathcal P\!\upharpoonright C\), and let \(P^+,Q^+\in\mathcal P\) be the corresponding ambient blocks, so that
\[
 P=P^+\cap\Sk(\mathbf C),
 \qquad
 Q=Q^+\cap\Sk(\mathbf C).
\]
Take \(x,y\in C\) with
\[
 \tau_{\mathbf C}(x)\in P,
 \qquad
 \tau_{\mathbf C}(y)\in Q.
\]
The three trigger elements
\[
 xy,\qquad x\backslash y,\qquad x/y
\]
are computed in \(\mathbf C\) exactly as in \(\mathbf B\), and their \(\tau\)-values are the corresponding ambient \(\tau\)-values.
By \(\tau\)-residuation coherence of \(\mathcal P\), these ambient \(\tau\)-values lie in the unique ambient output block determined by \(P^+\) and \(Q^+\).
Since the three trigger elements belong to \(C\), the same \(\tau\)-values lie in \(\Sk(\mathbf C)\), and therefore in the corresponding restricted output block of \(\mathcal P\!\upharpoonright C\).
Thus \(\mathcal P\!\upharpoonright C\) is \(\tau\)-residuation-coherent.
\end{proof}

\begin{corollary}[Hereditary visibility]
\label{cor:hereditary-visibility}
Let \(\mathbf C\) be an induced component subalgebra of a balanced residuated partially ordered semigroup, and let \(\mathcal P_C\) be the visibility partition used for a subsemilattice-steady step inside \(\mathbf C\) in the sense of Definition~\ref{def:subsemilattice-steady-visibility-data}.
If \(\mathbf R\) is an induced component subalgebra of \(\mathbf C\), then
\[
 \mathcal P_C\!\upharpoonright R
\]
is a \(\tau\)-residuation-coherent partition of \(\Sk(\mathbf R)\).
In particular, this applies when \(\mathbf R\) is a residual-coherent component of \(\mathbf C\).
\end{corollary}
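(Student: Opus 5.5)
This is a two-step composition of results already established. First, \(\mathcal P_C\) is a \(\tau\)-residuation-coherent partition of \(\Sk(\mathbf C)\). Second, restriction of such a partition to an induced component subalgebra preserves coherence.

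For the first step, I would apply Theorem~\ref{thm:Cr-refines-subsemilattice-steady} with \(\mathbf C\) in place of \(\mathbf A\). This is legitimate because \(\mathbf C\) is itself a balanced residuated partially ordered semigroup, by Definition~\ref{def:induced-component-subalgebra}. The visibility partition \(\mathcal P_C=\mathcal P_I\) comes from a visible subsemilattice \(I\subseteq\Sk(\mathbf C)\) over which \(\mathbf C\) is subsemilattice-steady. The first part of that theorem uses only the three local-unit identities \(u_{ab}=u_au_b\), \(u_{a\backslash b}=u_au_b\), \(u_{a/b}=u_au_b\) inside \(\mathbf C\), and it yields that \(\mathcal P_C\) is \(\tau_{\mathbf C}\)-residuation-coherent. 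I would note explicitly that \(\tau_{\mathbf C}\) is the restriction of the ambient local-unit map. This keeps the coherence notion for \(\mathbf C\) unambiguous.

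For the second step, I would invoke Lemma~\ref{lem:restriction-of-tau-res-coherence} with \(\mathbf B:=\mathbf C\), \(\mathbf C:=\mathbf R\) and \(\mathcal P:=\mathcal P_C\). This requires \(\mathbf R\) to be an induced component subalgebra of \(\mathbf C\), which is the hypothesis. The lemma then gives that \(\mathcal P_C\!\upharpoonright R\) is a \(\tau\)-residuation-coherent partition of \(\Sk(\mathbf R)\). The restricted family is genuinely a partition of \(\Sk(\mathbf R)\) because \(\Sk(\mathbf R)=\Sk(\mathbf C)\cap R\subseteq\Sk(\mathbf C)\). The nonempty traces of the blocks of \(\mathcal P_C\) therefore cover \(\Sk(\mathbf R)\) disjointly.

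For the final clause, I would take \(\mathbf R=\Comp{A}\) for a block \(A\) of a \(\tau\)-residuation-coherent partition of \(\Sk(\mathbf C)\), for instance \(\Cr(\mathbf C)\). Lemma~\ref{lem:coherent-components-induced}, applied inside the induced component subalgebra \(\mathbf C\) as its last sentence permits, shows that \(\Comp{A}\) is induced in \(\mathbf C\). The general statement then applies. If one prefers to view \(\mathbf R\) as induced in the ambient algebra, Lemma~\ref{lem:nested-induced-component-subalgebras} transfers it to being induced in \(\mathbf C\).

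The only step needing care is the first one. Theorem~\ref{thm:Cr-refines-subsemilattice-steady} must be applied intrinsically to \(\mathbf C\), with skeleton \(\Sk(\mathbf C)\) and local units \(\tau_{\mathbf C}\), rather than to the ambient algebra. The induced-subalgebra condition \(\Sk(\mathbf C)=\Sk(\mathbf B)\cap C\) together with the agreement of residuals ensures that nothing changes in this passage.
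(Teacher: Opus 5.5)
Your proposal is correct and matches the paper's proof: you apply Theorem~\ref{thm:Cr-refines-subsemilattice-steady} inside \(\mathbf C\) to get coherence of \(\mathcal P_C\), then Lemma~\ref{lem:restriction-of-tau-res-coherence} for the restriction, and for the final clause Lemma~\ref{lem:coherent-components-induced} applied inside \(\mathbf C\). Your extra checks are harmless additions that the paper leaves implicit: that the traces genuinely partition \(\Sk(\mathbf R)\), and the optional use of Lemma~\ref{lem:nested-induced-component-subalgebras}.
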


\begin{proof}
By Theorem~\ref{thm:Cr-refines-subsemilattice-steady}, applied inside \(\mathbf C\), the visibility partition \(\mathcal P_C\) is \(\tau\)-residuation-coherent on \(\Sk(\mathbf C)\).
If \(\mathbf R\) is induced in \(\mathbf C\), Lemma~\ref{lem:restriction-of-tau-res-coherence} applies directly.
If \(\mathbf R\) is a residual-coherent component of \(\mathbf C\), then Lemma~\ref{lem:coherent-components-induced}, applied inside \(\mathbf C\), shows that \(\mathbf R\) is an induced component subalgebra of \(\mathbf C\), and the same restriction lemma applies.
\end{proof}

\subsection{Frontier comparison under iteration}

\begin{definition}[Frontier partitions of an iterated decomposition]
\label{def:frontier-partitions}
Let a decomposition procedure be iterated on a balanced residuated partially ordered semigroup \(\mathbf A\).
A component is \emph{terminal} for the procedure if one more application of the procedure produces no proper refinement of that component.
The \emph{frontier partitions}
\[
 \mathcal F_0,\mathcal F_1,\mathcal F_2,\ldots
\]
are defined as follows.
Put
\[
 \mathcal F_0=\{A\}.
\]
Given \(\mathcal F_n\), each nonterminal component is replaced by the components produced by one application of the decomposition procedure inside it, while each terminal component is carried forward unchanged.
The resulting partition of the original carrier is \(\mathcal F_{n+1}\).

Thus \(\mathcal F_n\) is not the set of vertices at graph-theoretic depth \(n\).
It is the current frontier after \(n\) rounds of refinement.
In particular, terminal leaves remain present in all later frontier partitions.
\end{definition}

For the residual coherentization procedure, Proposition~\ref{prop:tau-residuation-cohesive-terminal} gives this terminality a concrete intrinsic form: a residual frontier component is terminal exactly when its induced component algebra is \(\tau\)-residuation-cohesive.

\begin{theorem}[Frontier comparison for the residual tree]
\label{thm:fibrewise-tree-comparison}
Let \(\mathbf A\) be a balanced residuated partially ordered semigroup.
Let
\[
 (\mathcal R_n)_{n\ge 0}
 \qquad\text{and}\qquad
 (\mathcal S_n)_{n\ge 0}
\]
be frontier partitions of the original carrier.
Assume that \((\mathcal R_n)_{n\ge 0}\) is obtained by iterating the residual coherentization \(\Cr\).
Assume that \((\mathcal S_n)_{n\ge 0}\) is obtained by iterating the following defined visibility step.
For each nonterminal induced component \(\mathbf B\), choose a nonempty visible subsemilattice
\[
 I_{\mathbf B}\subseteq\Sk(\mathbf B)
\]
such that the induced visibility data
\[
 \pi_{I_{\mathbf B}},\qquad
 u_b=\pi_{I_{\mathbf B}}(\tau_{\mathbf B}(b)),\qquad
 B^{I_{\mathbf B}}_p=\{b\in B:u_b=p\}
\]
satisfy Definition~\ref{def:subsemilattice-steady-visibility-data} inside \(\mathbf B\), and such that the nonempty fibres \(B^{I_{\mathbf B}}_p\) are induced component subalgebras.
Then replace \(\mathbf B\) by these induced component subalgebras, carrying terminal components forward unchanged.
Then
\[
 \mathcal R_n\le \mathcal S_n
 \qquad(n\ge 0).
\]
Consequently, whenever the frontier partitions are finite,
\[
 |\mathcal R_n|\ge |\mathcal S_n|
 \qquad(n\ge 0).
\]
If the subsemilattice-steady iteration terminates with terminal frontier \(\mathcal S_N\), and the residual iteration terminates with terminal frontier \(\mathcal R_M\), then
\[
 \mathcal R_M\le \mathcal S_N .
\]
By Proposition~\ref{prop:tau-residuation-cohesive-terminal}, the induced component algebras on the members of \(\mathcal R_M\) are precisely the terminal \(\tau\)-residuation-cohesive residual components.
Thus every terminal residual component is contained in a unique terminal subsemilattice-steady component, and every terminal subsemilattice-steady component is the disjoint union of the terminal \(\tau\)-residuation-cohesive residual components contained in it.
In particular,
\[
 |\mathcal R_M|\ge |\mathcal S_N|.
\]
Each terminal \(\tau\)-residuation-cohesive residual component is an induced balanced residuated component subalgebra of the terminal subsemilattice-steady component which contains it.
\end{theorem}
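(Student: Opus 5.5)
The plan is to prove \(\mathcal R_n\le\mathcal S_n\) by induction on \(n\), in the following stronger form: for every \(n\), every member of \(\mathcal R_n\) is contained in a member of \(\mathcal S_n\). Moreover, the members of both frontiers are carriers of induced component subalgebras of \(\mathbf A\).

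The base case is \(\mathcal R_0=\mathcal S_0=\{A\}\). For the carriers being induced, Lemma~\ref{lem:coherent-components-induced} covers the residual side, applied inside the current induced component. The hypothesis covers the steady side. Lemma~\ref{lem:nested-induced-component-subalgebras} then keeps inducedness transitive through the tree. This gives the auxiliary inducedness claim at every stage.

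For the inductive step, fix \(R\in\mathcal R_n\) contained in \(S\in\mathcal S_n\), and let \(\mathbf R,\mathbf S\) be the induced algebras. By Lemma~\ref{lem:nested-induced-component-subalgebras}, \(\mathbf R\) is induced in \(\mathbf S\). I must show that each \(\mathcal R_{n+1}\)-piece of \(R\) lies in an \(\mathcal S_{n+1}\)-piece of \(S\). There are two cases.

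\emph{Case 1: \(S\) is terminal on the steady side.} Then \(S\in\mathcal S_{n+1}\), and the residual pieces of \(R\) are subsets of \(R\subseteq S\), so there is nothing to prove.

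\emph{Case 2: \(S\) is nonterminal.} It is split by the visibility partition \(\mathcal P_{I}\) of \(\Sk(\mathbf S)\), with fibres \(S^I_p=\{x\in S:\tau(x)\in\pi_I^{-1}(p)\}\). By Corollary~\ref{cor:hereditary-visibility}, \(\mathcal P_I\!\upharpoonright R\) is a \(\tau\)-residuation-coherent partition of \(\Sk(\mathbf R)\).

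\begin{itemize}
\item If \(R\) is nonterminal on the residual side, its pieces are the \(\Cr(\mathbf R)\)-components. Theorem~\ref{prop:canonicality-Cr}, applied to \(\mathbf R\), gives \(\Cr(\mathbf R)\le\mathcal P_I\!\upharpoonright R\). So each block \(C\in\Cr(\mathbf R)\) lies in some \(\pi_I^{-1}(p)\cap\Sk(\mathbf R)\). Hence \(\{x\in R:\tau(x)\in C\}\subseteq S^I_p\), using that \(\tau_{\mathbf R}\) and \(\tau_{\mathbf S}\) are both restrictions of \(\tau\).
\item If \(R\) is terminal on the residual side, then \(\mathbf R\) is \(\tau\)-residuation-cohesive by Proposition~\ref{prop:tau-residuation-cohesive-terminal}, so \(\Cr(\mathbf R)=\{\Sk(\mathbf R)\}\). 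The same refinement \(\Cr(\mathbf R)\le\mathcal P_I\!\upharpoonright R\) forces \(\mathcal P_I\!\upharpoonright R\) to be a single block. Hence all of \(R\) lies in one fibre \(S^I_p\), which is exactly what carrying \(R\) forward requires.
\end{itemize}

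This terminal/nonterminal mismatch is the step I expect to be the main obstacle. The frontiers are not aligned by depth, and a residually terminal component may sit inside a steady component that still splits. The argument above resolves it via cohesiveness, but I will need to check that the restricted partition's blocks correspond to \(\mathcal S_{n+1}\)-members and not just to skeleton pieces. This relies on the fibres being defined through \(\tau\) of elements of \(R\), which lie in \(\Sk(\mathbf R)=\Sk(\mathbf S)\cap R\).

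The remaining consequences are bookkeeping.
\begin{itemize}
\item Refinement of finite partitions gives \(|\mathcal R_n|\ge|\mathcal S_n|\).
\item For the terminal frontiers, note that once both iterations terminate their frontiers are constant. Taking \(n=\max\{M,N\}\) gives \(\mathcal R_M=\mathcal R_n\le\mathcal S_n=\mathcal S_N\).
\item The union description follows because a refinement of partitions writes each coarse block as the disjoint union of the fine blocks it contains.
\item The final sentence, that each terminal residual component is an induced subalgebra of its terminal steady component, follows from Lemma~\ref{lem:nested-induced-component-subalgebras} and the inducedness invariant.
\end{itemize}
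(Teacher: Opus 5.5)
Your proposal is correct and follows essentially the same route as the paper's proof: induction on \(n\) with a case split on whether the containing steady component \(S\) is terminal, then on whether \(R\) is residually terminal, using Corollary~\ref{cor:hereditary-visibility} and the canonicality of \(\Cr(\mathbf R)\), with the same bookkeeping for the cardinality and terminal-frontier consequences. The one point that both you and the paper leave implicit is that inducedness is transitive along the tree. This is not literally Lemma~\ref{lem:nested-induced-component-subalgebras}, but it follows at once from \(\Sk(\mathbf R')=\Sk(\mathbf R)\cap R'=\Sk(\mathbf A)\cap R'\).
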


\begin{proof}
We prove the refinement statement by induction on \(n\).
For \(n=0\),
\[
 \mathcal R_0=\mathcal S_0=\{A\},
\]
so the assertion is trivial.

Assume
\[
 \mathcal R_n\le\mathcal S_n .
\]
Let \(R'\in\mathcal R_{n+1}\).
Let \(R\in\mathcal R_n\) be the parent of \(R'\); thus either \(R\) is terminal and \(R'=R\), or \(R\) is nonterminal and \(R'\) is one of the components obtained by applying \(\Cr\) inside \(R\).
By the induction hypothesis, choose \(S\in\mathcal S_n\) such that
\[
 R\subseteq S.
\]

If \(S\) is terminal for the subsemilattice-steady procedure, then \(S\) is carried forward unchanged to \(\mathcal S_{n+1}\), and
\[
 R'\subseteq R\subseteq S.
\]
Hence \(R'\) is contained in a member of \(\mathcal S_{n+1}\).

Suppose now that \(S\) is split at the next subsemilattice-steady step, and let \(\mathcal P_S\) be the visibility partition used inside \(S\).
We first note that \(R\) is an induced component subalgebra of \(S\).
Residual-coherent components are induced in their parent components by Lemma~\ref{lem:coherent-components-induced}, and repeated application of Lemma~\ref{lem:nested-induced-component-subalgebras} shows that every residual frontier component is induced in the original algebra.
Similarly, by the definition of the subsemilattice-steady decomposition used here, each subsemilattice-steady frontier component is an induced component subalgebra of the original algebra; nesting preserves inducedness by Lemma~\ref{lem:nested-induced-component-subalgebras}.
Since \(R\subseteq S\), Lemma~\ref{lem:nested-induced-component-subalgebras} therefore gives that \(R\) is induced in \(S\).
Let \(\mathbf R\) and \(\mathbf S\) denote the induced component subalgebras with carriers \(R\) and \(S\), respectively.
By Corollary~\ref{cor:hereditary-visibility},
\[
 \mathcal P_S\!\upharpoonright R
\]
is a \(\tau\)-residuation-coherent partition of \(\Sk(\mathbf R)\).

If \(R\) is terminal for the residual procedure, equivalently if \(\mathbf R\) is \(\tau\)-residuation-cohesive, then \(\Cr(\mathbf R)\) is the one-block partition of \(\Sk(\mathbf R)\).
Since \(\Cr(\mathbf R)\) is the finest \(\tau\)-residuation-coherent partition of \(\Sk(\mathbf R)\), the restricted visibility partition \(\mathcal P_S\!\upharpoonright R\) must also be one-block.
Therefore \(R'=R\) lies inside a single child of \(S\).

If \(R\) is nonterminal for the residual procedure, then \(R'\) is a \(\Cr(\mathbf R)\)-component.
Since \(\mathcal P_S\!\upharpoonright R\) is \(\tau\)-residuation-coherent, Proposition~\ref{prop:canonicality-Cr} gives
\[
 \Cr(\mathbf R)\le \mathcal P_S\!\upharpoonright R .
\]
Hence \(R'\) is contained in one block of \(\mathcal P_S\!\upharpoonright R\), and therefore in one child of \(S\) in \(\mathcal S_{n+1}\).

In all cases, every member of \(\mathcal R_{n+1}\) is contained in a member of \(\mathcal S_{n+1}\).
Thus
\[
 \mathcal R_{n+1}\le\mathcal S_{n+1}.
\]
The induction is complete.

The cardinality inequality follows from refinement of finite partitions.

Assume now that the subsemilattice-steady iteration terminates at \(\mathcal S_N\).
Then
\[
 \mathcal S_m=\mathcal S_N
 \qquad(m\ge N),
\]
because terminal components are carried forward unchanged.
If the residual iteration terminates at \(\mathcal R_M\), then every induced algebra on a member of \(\mathcal R_M\) is \(\tau\)-residuation-cohesive by Proposition~\ref{prop:tau-residuation-cohesive-terminal}.
Replacing \(M\) by a larger index if necessary, we may assume \(M\ge N\).
The refinement already proved gives
\[
 \mathcal R_M\le \mathcal S_M=\mathcal S_N .
\]
Thus every terminal residual component, equivalently every terminal \(\tau\)-residuation-cohesive residual component, is contained in a unique terminal subsemilattice-steady component.
Since both frontiers are partitions of the original carrier, each terminal subsemilattice-steady component is the disjoint union of the terminal \(\tau\)-residuation-cohesive residual components contained in it.
The cardinality inequality
\[
 |\mathcal R_M|\ge |\mathcal S_N|
\]
follows immediately.
Finally, both the terminal \(\tau\)-residuation-cohesive residual component and the terminal subsemilattice-steady component are induced in the original algebra, by the same argument as above.
Since the former is contained in the latter, Lemma~\ref{lem:nested-induced-component-subalgebras} shows that each terminal \(\tau\)-residuation-cohesive residual component is an induced balanced residuated component subalgebra of the terminal subsemilattice-steady component containing it.
\end{proof}

The conclusion is therefore stronger than a comparison of frontier widths.
At every finite stage, and in particular at terminality when both procedures terminate, the \(\tau\)-residual frontier refines the corresponding subsemilattice-steady frontier.
Thus each terminal subsemilattice-steady component is assembled from terminal \(\tau\)-residuation-cohesive residual components.

\subsection{Frontier refinement and rooted-visibility obstruction}
\label{subsec:frontier-refinement-and-rooted-visibility-obstruction}

The preceding theorem gives the formal frontier comparison.
We now record the structural reason behind it and separate it from the stronger rooted-visibility obstruction exhibited by the examples.
There are two distinct points.

\smallskip
First, the residual-coherent construction refines the subsemilattice-steady method at the level of iterated frontiers.
This is the content of Theorem~\ref{thm:fibrewise-tree-comparison}: the residual-coherent frontier refines the corresponding subsemilattice-steady frontier at every finite stage of the decomposition tree.
The conceptual reason is that \(\tau\)-residual coherence identifies only those positive-idempotent distinctions which must be identified in order for the product and residual \(\tau\)-value laws to be well defined at quotient level.
The formal reason is that a subsemilattice-steady presentation starts from a visible subsemilattice \(I\subseteq\Sk(\mathbf A)\) and replaces the full local unit \(r\) by its visible projection
\[
 \pi_I(r)=\max\{p\in I:p\le r\}.
\]
The steadiness assumptions say precisely that, after this projection, the product and residual \(\tau\)-value laws are well defined.
Thus the visibility partition induced by \(I\) is already a \(\tau\)-residuation-coherent quotient of the full positive-idempotent skeleton.
Since \(\Cr(\mathbf A)\) is the finest such quotient at the component to which it is applied, the residual-coherent step refines the corresponding subsemilattice-steady step.
Theorem~\ref{thm:fibrewise-tree-comparison} iterates this one-step refinement along the whole decomposition tree.

\smallskip
Second, the examples show more than frontier refinement.
Examples~\ref{ex:C2_times_max3_residual_visibility} and~\ref{ex:max3_no_nontrivial_subsemilattice_steady} show a genuine separation between forced residual coherence and \lq rooted\rq\ subsemilattice visibility.
The first example shows that residual coherentization can give a strictly finer decomposition than subsemilattice visibility.
The second shows the stronger phenomenon: an algebra may be indecomposable by all nontrivial subsemilattice-steady presentations, while still admitting a proper \(\tau\)-residual coherent decomposition.

The admissible quotients in the subsemilattice-steady method are not arbitrary coherent quotients.
They must be representable as fibres of a visibility projection.
Such fibres have a rooted-convex shape.
If \(F_p=\pi_I^{-1}(p)\), then \(p\in F_p\), and \(p\le r\) for every \(r\in F_p\).
Moreover, \(F_p\) is order-convex: if \(r,s\in F_p\) and \(r\le t\le s\), then any visible element above \(p\) and below \(t\) would also lie below \(s\), which would contradict \(\pi_I(s)=p\).
Hence every visibility fibre is rooted convex.

Example~\ref{ex:max3_no_nontrivial_subsemilattice_steady} illustrates the difference.
There
\[
 \Cr(\mathbf D)=\{\{d_0\},\{h_1,h_2,k\}\}.
\]
The block \(\{h_1,h_2,k\}\) is not problematic because of non-convexity; in the diamond order it is convex.
The obstruction is rootedness.
It has two incomparable minimal elements, \(h_1\) and \(h_2\), and no least element.
Therefore it cannot be a visibility fibre at any stage of an iterated subsemilattice-steady decomposition.
Indeed, in that example every nontrivial visible subsemilattice still violates a residual \(\tau\)-value law, while the only subsemilattice-steady visibility is the trivial one.
Thus the example exhibits collapse by forced join-congruence beyond rooted subsemilattice-visibility representability.

This rooted-convex restriction is {\em inherited} by restriction to components.
Indeed, suppose that \(F\) is rooted convex in a poset \(P\), and that \(G\subseteq F\) is rooted convex in the induced order on \(F\).
Then \(G\) is rooted convex in \(P\): its root is still below all elements of \(G\), and if \(a,b\in G\) and \(a\le t\le b\) in \(P\), then \(t\in F\) by the convexity of \(F\), and then \(t\in G\) by the relative convexity of \(G\) inside \(F\).

Consequently, every proper block produced by a subsemilattice-steady visibility step is rooted convex in the skeleton of the component in which it is produced, and this rooted-convex property is preserved when one passes to later induced subcomponents.
In particular, if a proposed block is not rooted convex in the original positive-idempotent skeleton, and all previous visibility blocks containing it are rooted convex there, then that proposed block cannot occur at any later finite stage of an iterated visibility tree.
Thus a forced coherent block with no rooted-convex representation cannot be recovered merely by first passing to a coarser rooted visibility component and then decomposing inside that component.

Residual coherentization is not subject to this rooted-visibility representability constraint.
It forms the forced join-congruence quotient dictated by the product and residual \(\tau\)-value laws.
The resulting coherent blocks need not be fibres of any visibility projection onto a chosen subsemilattice, neither at the root nor after restriction to components.

The product example, Example~\ref{ex:C2_times_max3_residual_visibility}, explains why the comparison must be made along the whole decomposition tree, not only at the root.
There the subsemilattice-steady method obtains a nontrivial first split, but it does so by hiding the obstructing \(D\)-coordinate inside the larger visible fibres
\[
 A_{(p,d_0)}=B_p\times\mathcal P(T),
 \qquad p\in\{u,g\}.
\]
This creates the apparent possibility that the missing residual split might be recovered later inside those coarser components.
The residual coherentization, however, would require the two upper blocks
\[
 U_u=\{(u,h_1),(u,h_2),(u,k)\},
 \qquad
 U_g=\{(g,h_1),(g,h_2),(g,k)\}.
\]
Both are convex in the original positive-idempotent skeleton \(\{u,g\}\times\{d_0,h_1,h_2,k\}\), but neither is rooted.
Indeed, \(U_u\) has two incomparable minimal elements, \((u,h_1)\) and \((u,h_2)\), and \(U_g\) has two incomparable minimal elements, \((g,h_1)\) and \((g,h_2)\).
By the rooted-convex inheritance argument, such blocks cannot occur at any finite stage of an iterated subsemilattice-steady visibility tree.
Thus the missing residual split cannot be recovered later merely by decomposing inside the coarser first-level visibility fibres.

\medskip
It is important that the obstruction here is rootedness, not merely the fact that a first-level visibility quotient is coarse.
In Example~6.11 of~\cite{BGJPS}, the first visibility over \(I=\{1,p\}\) identifies the full local units \(p\) and \(q\), but the block \(\{p,q\}\) is rooted convex, with root \(p\).
Inside the upper fibre \(\{p,q,b,\bot\}\), the later split into \(\{p\}\) and \(\{q,b,\bot\}\) is again a rooted visibility split.
Thus that example is compatible with the frontier theorem: the hidden distinction is recoverable later precisely because no non-rooted visibility obstruction is present.

\bigskip
The subsemilattice-steady method deliberately begins with a chosen visibility quotient.
The point proved here is different.
In the frontierwise refinement sense of Theorem~\ref{thm:fibrewise-tree-comparison}, residual coherentization refines every subsemilattice-steady decomposition arising from such a visibility choice.
The rooted-visibility examples show a stronger separation: residual coherentization can produce forced coherent blocks which are not representable as visibility fibres, and it can even produce a proper decomposition when no nontrivial subsemilattice-steady presentation is available.

\section{Conclusion}
\label{sec:conclusion}

This paper gives a canonical decomposition--reconstruction theorem for balanced residuated partially ordered semigroups.
The starting point is the intrinsic local-unit map \(\tau\).
The primitive fibres of \(\tau\) are often too fine: product and residual outputs need not have local units determined by the local units of their inputs.
The residual coherentization \(\Cr(\M)\) is the canonical response to this failure.
It is the finest quotient of the positive-idempotent skeleton on which the local-unit behavior of multiplication and of both residuals becomes coherent.

The reconstruction theorem shows that this quotient loses no information about the original algebra.
Its blocks carry component algebras, and the quotient skeleton records the target component for products and residuals.
The additional transport data are not two ordinary direct systems of transition maps.
Instead, for each comparable source and target component, the construction uses families of shadow maps indexed by the positive idempotents of the target component.
The product-shadow family is already needed for multiplication: for \(x\in\layer{A}\), \(y\in\layer{B}\), and \(C=A\vee B\), the product is recovered as
\[
 xy=\min\{(xr)(yr):r\in C\}.
\]
Thus product recovery is not obtained from a single transition map, but from the whole family of product-shadow maps into the target component.
Order recovery then uses both the product-shadow and residual-shadow families.
Their order-theoretic behavior is different: the product shadow \(xq\) lies above \(x\) in the ambient order, while the residual shadow \(y/q\) lies below \(y\) in the ambient order.
Thus, for a suitable target idempotent \(q\), the comparison
\[
 xq\le y/q
\]
inside the target component is a genuine ambient-order certificate.
After multiplication and order have been recovered, the residuals are forced by residuation.
Thus residual coherentization is not merely a way of producing components; it is a complete canonical encoding of the original balanced residuated ordered semigroup.

The comparison with subsemilattice-steady visibility decompositions shows how the present construction differs from a chosen-visibility approach.
Instead of selecting a visible subsemilattice in advance, residual coherentization uses the canonical quotient of the full positive-idempotent skeleton forced by the product and residual \(\tau\)-value laws.
The frontier comparison theorem makes this precise.
At every finite stage, and in particular at terminality when both procedures terminate, the \(\tau\)-residual frontier refines the corresponding subsemilattice-steady frontier.
Thus each terminal subsemilattice-steady component is assembled from terminal \(\tau\)-residuation-cohesive residual components.

The construction is therefore not tied to a special class of chains or to a particular logical presentation.
It is formulated in the broader language of balanced residuated partially ordered semigroups, a setting which includes the ordered multiplicative and residual structure underlying many familiar residuated lattices, quantales, and ideal-theoretic examples.
In this sense residual coherentization is a general algebraic decomposition mechanism for local-unit-based residuated structures.

\section*{Funding}

The author gratefully acknowledges support from the Ministry of Culture and Innovation of Hungary, through the National Research, Development and Innovation Fund, grant no.~K138596.


\begin{thebibliography}{99}

\bibitem{AglianoMontagna2003}
P.~Aglian\`o and F.~Montagna.
\newblock Varieties of BL-algebras I: general properties.
\newblock \emph{J. Pure Appl. Algebra} \textbf{181} (2003), 105--129.
\newline\url{https://doi.org/10.1016/S0022-4049(02)00329-8}

\bibitem{BGJPSm}
S.~Bonzio, J.~Gil-F\'erez, P.~Jipsen, A.~P\v{r}enosil, and M.~Sugimoto.
\newblock On the structure of balanced residuated partially ordered monoids.
\newblock In U.~Fahrenberg, W.~Fussner, and R.~Gl\"uck, editors,
\emph{Relational and Algebraic Methods in Computer Science},
volume~14787 of \emph{Lecture Notes in Computer Science}, pages 83--100.
Springer, Cham, 2024.
\newline\url{https://doi.org/10.1007/978-3-031-68279-7_6}

\bibitem{BGJPS}
S.~Bonzio, J.~Gil-F\'erez, P.~Jipsen, A.~P\v{r}enosil, and M.~Sugimoto.
\newblock Balanced residuated partially ordered semigroups.
\newblock arXiv:2505.12024v2, 2026.
\newline\url{https://doi.org/10.48550/arXiv.2505.12024}

\bibitem{Busaniche2005}
M.~Busaniche.
\newblock Decomposition of BL-chains.
\newblock \emph{Algebra Univers.} \textbf{52} (2005), 519--525.
\newline\url{https://doi.org/10.1007/s00012-004-1899-4}

\bibitem{CastiglioniZuluaga2021}
J.L.~Castiglioni and W.J.~Zuluaga Botero.
\newblock Split exact sequences of finite MTL-chains.
\newblock \emph{Rev. Un. Mat. Argentina} \textbf{62} (2021), 295--304.
\newline\url{https://doi.org/10.33044/revuma.1787}

\bibitem{CignoliDOttavianoMundici2000}
R.~L. O. Cignoli, I.~M. L. D'Ottaviano, and D.~Mundici.
\newblock \emph{Algebraic Foundations of Many-Valued Reasoning}.
\newblock Trends in Logic, vol.~7, Kluwer Academic Publishers, Dordrecht, 2000.
\newline\url{https://doi.org/10.1007/978-94-015-9480-6}

\bibitem{Clifford1941}
A.H.~Clifford.
\newblock Semigroups admitting relative inverses.
\newblock \emph{Ann. Math.} \textbf{42} (1941), 1037--1049.
\newline\url{https://doi.org/10.2307/1968781}

\bibitem{CliffordPreston1961}
A.H.~Clifford and G.B.~Preston.
\newblock \emph{The Algebraic Theory of Semigroups, Vol. I}.
\newblock American Mathematical Society, Providence, 1961.
\newline\url{https://doi.org/10.1090/surv/007.1}

\bibitem{EstevaGodo2001}
F.~Esteva and L.~Godo.
\newblock Monoidal t-norm based logic: towards a logic for left-continuous t-norms.
\newblock \emph{Fuzzy Sets Syst.} \textbf{124} (2001), 271--288.
\newline\url{https://doi.org/10.1016/S0165-0114(01)00098-7}

\bibitem{GalatosJipsenKowalskiOno2007}
N.~Galatos, P.~Jipsen, T.~Kowalski, and H.~Ono.
\newblock \emph{Residuated Lattices: An Algebraic Glimpse at Substructural Logics}.
\newblock Elsevier, Amsterdam, 2007.
\newline\url{https://doi.org/10.1016/S0049-237X(13)70001-6}

\bibitem{GilFerezJipsenLodhia2023}
J.~Gil-F\'erez, P.~Jipsen, and S.~Lodhia.
\newblock The structure of locally integral involutive po-monoids and semirings.
\newblock In R.~Gl\"uck, L.~Santocanale, and M.~Winter, editors, \emph{Relational and Algebraic Methods in Computer Science}, RAMiCS 2023, Lecture Notes in Computer Science, vol.~13896, Springer, Cham, pp.~69--86, 2023.
\newline\url{https://doi.org/10.1007/978-3-031-28083-2_5}

\bibitem{GilFerezJipsenSugimoto2023}
J.~Gil-F\'erez, P.~Jipsen, and M.~Sugimoto.
\newblock Locally integral involutive PO-semigroups.
\newblock \emph{Fundam. Inform.} \textbf{195} (2025), Article~7.
\newblock First circulated as arXiv:2310.12926 (2023).
\newline\url{https://doi.org/10.46298/fi.12449}
\newline\url{https://doi.org/10.48550/arXiv.2310.12926}

\bibitem{Gilmer1972}
R.~Gilmer.
\newblock \emph{Multiplicative Ideal Theory}.
\newblock Marcel Dekker, New York, 1972.

\bibitem{Green1951}
J.A.~Green.
\newblock On the structure of semigroups.
\newblock \emph{Ann. Math.} \textbf{54} (1951), 163--172.
\newline\url{https://doi.org/10.2307/1969317}

\bibitem{Hajek1998}
P.~H\'ajek.
\newblock \emph{Metamathematics of Fuzzy Logic}.
\newblock Trends in Logic, vol.~4, Kluwer Academic Publishers, Dordrecht, 1998.
\newline\url{https://doi.org/10.1007/978-94-011-5300-3}

\bibitem{Horcik2007}
R.~Hor\v{c}\'ik.
\newblock Structure of commutative cancellative integral residuated lattices on \((0,1]\).
\newblock \emph{Algebra Univers.} \textbf{57} (2007), 303--332.
\newline\url{https://doi.org/10.1007/s00012-007-2050-0}

\bibitem{Horcik2011}
R.~Hor\v{c}\'ik.
\newblock On the structure of finite integral commutative residuated chains.
\newblock \emph{J. Log. Comput.} \textbf{21} (2011), 717--728.
\newline\url{https://doi.org/10.1093/logcom/exp059}

\bibitem{HorcikMontagna2009}
R.~Hor\v{c}\'ik and F.~Montagna.
\newblock Archimedean classes in integral commutative residuated chains.
\newblock \emph{Math. Log. Q.} \textbf{55} (2009), 320--336.
\newline\url{https://doi.org/10.1002/malq.200710091}

\bibitem{Howie1995}
J.M.~Howie.
\newblock \emph{Fundamentals of Semigroup Theory}.
\newblock Oxford University Press, Oxford, 1995.
\newline\url{https://doi.org/10.1093/oso/9780198511946.001.0001}

\bibitem{JeneiGroupRepresentation}
S.~Jenei.
\newblock Group representation for even and odd involutive commutative residuated chains.
\newblock \emph{Stud. Log.} \textbf{110} (2022), 881--922.
\newblock First circulated as arXiv:1910.01404 (2019).
\newline\url{https://doi.org/10.1007/s11225-021-09981-y}
\newline\url{https://doi.org/10.48550/arXiv.1910.01404}

\bibitem{Jipsen2024BalancedPosets}
P.~Jipsen.
\newblock On the structure of balanced residuated posets.
\newblock In \emph{Abstracts of the Topology, Algebra, and Categories in Logic 2024 Conference}, Barcelona, 2024.

\bibitem{JipsenSugimoto2022}
P.~Jipsen and M.~Sugimoto.
\newblock On varieties of residuated po-magmas and the structure of finite ipo-semilattices.
\newblock In \emph{Topology, Algebra, and Categories in Logic 2022, Coimbra -- Book of Abstracts}, 2022.

\bibitem{JipsenTuytValota2021}
P.~Jipsen, O.~Tuyt, and D.~Valota.
\newblock The structure of finite commutative idempotent involutive residuated lattices.
\newblock \emph{Algebra Univers.} \textbf{82}, 57 (2021).
\newline\url{https://doi.org/10.1007/s00012-021-00751-4}

\bibitem{LarsenMcCarthy1971}
M.~D. Larsen and P.~J. McCarthy.
\newblock \emph{Multiplicative Theory of Ideals}.
\newblock Academic Press, New York, 1971.

\bibitem{MontagnaNogueraHorcik2006}
F.~Montagna, C.~Noguera, and R.~Hor\v{c}\'ik.
\newblock On weakly cancellative fuzzy logics.
\newblock \emph{J. Log. Comput.} \textbf{16} (2006), 423--450.
\newline\url{https://doi.org/10.1093/logcom/exl002}

\bibitem{Plonka1967}
J.~P\l{}onka.
\newblock On a method of construction of abstract algebras.
\newblock \emph{Fundam. Math.} \textbf{61}(2) (1967), 183--189.
\newline\url{https://doi.org/10.4064/fm-61-2-183-189}

\bibitem{Rees1940}
D.~Rees.
\newblock On semi-groups.
\newblock \emph{Proc. Camb. Philos. Soc.} \textbf{36} (1940), 387--400.
\newline\url{https://doi.org/10.1017/S0305004100017436}

\bibitem{Rosenthal1990}
K.~I. Rosenthal.
\newblock \emph{Quantales and Their Applications}.
\newblock Pitman Research Notes in Mathematics Series, vol.~234, Longman Scientific \& Technical, Harlow, 1990.

\bibitem{Sugimoto2024LocallyIntegral}
M.~Sugimoto.
\newblock Decompositions of locally integral involutive residuated structures.
\newblock In \emph{Abstracts of the Topology, Algebra, and Categories in Logic 2024 Conference}, Barcelona, 2024.

\bibitem{WardDilworth1939}
M.~Ward and R.~P. Dilworth.
\newblock Residuated lattices.
\newblock \emph{Trans. Am. Math. Soc.} \textbf{45} (1939), 335--354.
\newline\url{https://doi.org/10.1090/S0002-9947-1939-1501995-3}

\end{thebibliography}
\end{document}